\theoremstyle{plain}
\newtheorem{lemma}{Lemma}[section]
\newtheorem{proposition}{Proposition}[section]
\newtheorem{corollary}{Corollary}[section]
\theoremstyle{definition}
\theoremstyle{remark}
\newcommand{\calL}{\mathcal{L}}
\newcommand{\RR}{\mathbb{R}}
\newcommand{\la}{\langle}
\newcommand{\ra}{\rangle}
\newcommand{\abs}[1]{\lvert #1 \rvert}
\newcommand{\qnlbilinear}{b_{\mathsf{qnl}}}
\newcommand{\nlbilinear}{b_{\mathsf{n}}}
\newcommand{\helen}[1]{{#1}}
\title{Quasinonlocal coupling of nonlocal diffusions}
\author{Xingjie Helen Li}
\address{Department of Mathematics and Statistics, University of North
  Carolina at Charlotte, Charlotte NC 28223, USA}
\email{xli47@uncc.edu}
\author{Jianfeng Lu}
\address{Department of Mathematics, Department of
  Physics, and Department of Chemistry, Duke University, Box 90320,
  Durham NC 27708, USA}
\email{jianfeng@math.duke.edu}
\thanks{The work of J.L.~was supported in part by the National Science
  Foundation under awards DMS-1312659 and DMS-1454939.
  The work of X.L.~is supported in part by the Simons Foundation Collaboration Grant
  with award number 426935.
  We thank the helpful discussion with Xiaochuan Tian. 
}
\begin{document}

\maketitle

\begin{abstract}
  We developed a new {\helen{self-adjoint}}, consistent, and stable coupling
  strategy for nonlocal diffusion models, inspired by the
  quasinonlocal atomistic-to-continuum method for crystalline solids.
  The proposed coupling model is coercive with respect to the energy
  norms induced by the nonlocal diffusion kernels as well as the $L^2$
  norm, and it satisfies the maximum principle.  A finite difference
  approximation is used to discretize the coupled system, which
  inherits the property from the continuous formulation. Furthermore,
  we \helen{design a numerical example} which shows the discrepancy
  between the fully nonlocal and fully local diffusions, whereas the
  result of the coupled diffusion agrees with that of the fully
  nonlocal diffusion.
\end{abstract}

\keywords{Keywords: nonlocal diffusion, quasinonlocal coupling, stability}


\section{Introduction}\label{section_intro}

Nonlocal models have been developed and received lot of attention
\helen{in recent years} to model systems with important scientific and engineering
applications, for example, the phase transition
\cite{Bates1999,Fife2003}, the nonlocal heat conduction
\cite{Bobaru2010b}, the peridynamics model for mechanics
\cite{Silling2000}, just to name a few. These nonlocal models give
rise to new questions and challenges to applied mathematics both in
terms of analysis and numerical algorithms.

While it is established that the nonlocal formulations can often
provide {more accurate descriptions} of the systems, the nonlocality
also increases the computational cost compared to conventional models
based on PDEs. As a result, methodologies that couple nonlocal models
with more localized descriptions have been investigated in recent
years. The goal is to combine the accuracy of nonlocal models with the
computational and modeling efficiency of local PDEs. The basic idea is
to apply the nonlocal model to those parts of the domain that require
the improved accuracy, and use the more efficient local PDE model in
other regions to reduce the computational costs. Besides, from a
modeling point of view, the nonlocal models usually give rise to
modeling challenges near the boundary, as volumetric boundary
conditions are needed and those are often hard to characterize or
match with the physical setup of the system. It is often easier to
switch to the local models near the boundary so that usual boundary
conditions can be used. {\helen{Coupling different nonlocal models is
    also important as it has applications in modeling hierarchically
    structured materials \cite{Gao2007,Bobaru2011a},  nonlocal heat
    conductors \cite{Bobaru2010b,Delia2014a}, etc.  }}

In the past decades, many works {have been} developed {for} numerical analysis of
nonlocal models, for example
\cite{Chasseigne2006a,Du2012a,Du2013a,Du2013b,Kriventsov,Tian2016b},
which {\helen{gave understanding to properties and asymptotic
behaviors}}. In particular, many strategies are proposed to
couple together local-to-nonlocal or two nonlocal models with
different nonlocality. For example, (1) Arlequin type domain
decomposition, see e.g.,
\cite{prudhomme:modelingerrorArlequin,Han2012}; (2) Optimal-control
based coupling, e.g., \cite{Delia2015a,Delia2014a}; (3) Morphing
approach as in \cite{Lub2012a}; (4) Force-based blending mechanism,
see e.g., \cite{Seleson2013a,Seleson2015a}; (5) Energy-based blending mechanism, see e.g., \cite{Silling2015a,Tian2016a}, just to name a few.

{\helen{ Nonlocal models are considered as top-down continuum
    approaches which use integral formulations to represent nonlocal
    spatial interactions
    \cite{Petersson1976,Kohlhoff1989,Silling2000}.}  \helen{Within a
    nonlocal model, each material point interacts through short-range
    forces with other points inside a horizon of prescribed radius
    $\delta$, which leads to a nonlocal integral-type continuum theory
    \cite{Silling2000}. Complementary to top-down nonlocal continuum
    approaches are bottom-up atomistic-to-continuum (AtC) approaches,
    which give atomistic accuracy near defects such as crack tips and
    continuum finite element efficiency elsewhere. Notice that
    top-down nonlocal models and bottom-up AtC models actually share a
    lot interconnections (see for example,
    \cite{bely1999,Bobaru2010a,Bobaru2011a,Lipton2016a} for more
    discussions).  }} Many of the coupling strategies developed for
nonlocal models were also inspired or connected to those of AtC
coupling methods for crystalline materials (see e.g., the review
articles \cite{Luskin2013a,Tadmor2009a}). The AtC coupling can be
indeed understood as a nonlocal-local coupling as well, since the
atomistic models typically involve interactions of atoms
{\helen{within the interaction range (i.e., horizon) which are}}
beyond the nearest neighbor. {\helen{ Additionally, the nonlocal
    models and AtC models have some similarities in one-dimension
    after finite difference discretization.}}

In fact, the coupling strategy we proposed in this work also borrows
the idea of quasinonlocal coupling \cite{Shimokawa:2004,E:2006} {\helen{
which was developed in the context of
AtC method, and a detailed interpretation of quasinonlocal coupling will be given in section~\ref{section_qnl}.}} \helen{The proposed method gives} a {self-adjoint}
coupling kernel in the divergence form using the terminology of
\cite{CaffarelliChanVasseur:11}. In physics terms, the coupling
naturally satisfies the Newton's third law since the coupling is done
on the energy level. Moreover, the coupling we proposed satisfies the
patch-test consistency, $L^2$ stability, and the maximum principle. As far
as we know, none of existing coupling strategies {satisfies} all these
properties.  We believe the coupling strategy is more advantageous
over the existing methods.

The paper is organized as follows. In
section~\ref{section_full_nonlocal}, we give a brief review of
nonlocal diffusion problem.  In section~\ref{section_qnl}, we propose
the quasinonlocal coupling, and prove it is {\helen{self-adjoint}} and patch-test
consistent. In addition, we prove that the quasinonlocal diffusion is
positive-definite with respect to the energy norm induced by the
nonlocal diffusion {kernels} as well as the $L^2$ norm, and it
satisfies the maximum principle.  In section~\ref{section_fdm}, we
provide a first order finite difference approximation for the purpose
of numerical implementation, which keeps the properties of
continuous level. In section~\ref{section_num}, we verify our
theoretical results by a few numerical examples.

\section{The nonlocal diffusion}\label{section_full_nonlocal}

We first review the definition and properties of the nonlocal
diffusion problem, following mainly the notations in
\cite{Du2012a,Du2013a}. Consider an open domain $\Omega
\subset\RR^d$ and for $u(x): \, \RR^d \to \RR$,
the (linear) nonlocal diffusion operator $\calL_{\delta}$ is defined
as
\begin{equation}\label{nonlocal_operator}
\calL_{\delta} u(x):= \int_{\RR^d}\left(u(y)-u(x)\right)\gamma_{\delta}(x,y)dy,\quad \forall x\in \Omega,
\end{equation}
where $\delta$ is the horizon parameter and
$\gamma_{\delta}(x,y): \RR^d\times\RR^d\rightarrow \RR$
{\helen{denotes a nonnegative symmetric function}}.  {\helen{Under the
    assumption of translational invariance and isotropy, the kernel
    $\gamma_{\delta}(x,y)$ reduces to a radial function depending on
    the distance $|x-y|$ and }} is given by{\helen{\cite{Lehoucq2010}
  }}
\begin{equation}\label{nonlocal_kernel}
  \gamma_{\delta}(x,y)=\gamma_{\delta}(\helen{\abs{x - y}})=\frac{1}{\delta^{d+2}} \gamma\left(\frac{\helen{\abs{x - y}}}{\delta}\right),
\end{equation}
where $d$ is the spatial dimension and $\gamma$ is a non-negative
radially symmetric nonlocal diffusion kernel which satisfies
\begin{itemize}
\item \helen{Translational invariance and isotropy:}
 $\gamma(x,y)=\gamma(\abs{y-x})\geq 0$;
\item Compact support: $\gamma(x, y) = 0 $ if $\abs{x - y} \geq 1$;
\item Finite second moment: $\int s^2 \gamma(s)ds<\infty$. Note that
  due to the scaling choice in \eqref{nonlocal_kernel}, the second
  moment is scale invariant, \textit{i.e.}, $\int s^2
  \gamma_{\delta}(s) ds$ takes the same value for any $\delta$.
\end{itemize}
With Dirichlet boundary condition, the initial-boundary value problem
for the nonlocal diffusion is then given by:
\begin{equation}\label{nonlocal_divergence_form}
\begin{cases}
\displaystyle\frac{\partial u_n}{\partial t}=\calL_{\delta} u_n(x)= \int_{B_{\delta}(x)}\left(u_n(y)-u_n(x)\right)\gamma_{\delta}(x,y)dy, \quad \forall x\in \Omega,\, \forall t>0\\
u_n(x,t)=0,\quad \forall x\in \Omega_{\mathcal{I}},\, \forall t\ge 0,\\
u_n(x,0)= u_n^0(x), \quad \forall x \in \Omega,
\end{cases}
\end{equation}
where the subscript in $u_n$ stands for ``nonlocal'' and
$\Omega_{\mathcal{I}} = \RR^d \backslash \Omega$ is known as the
interaction domain \cite{Du2013b}, {\helen{on which}} the volumetric Dirichlet
boundary condition is imposed on $u_n$. Since the kernel
$\gamma_{\delta}(x, y)$ is zero if $\abs{x - y} > \delta$, we have
restricted the integration \eqref{nonlocal_divergence_form} in
$B_{\delta}(x)$: the $\delta$-ball around $x$ and also the interaction
domain $\Omega_{\mathcal{I}}$ can be also restricted to the
$\delta$-neighborhood of $\Omega$:
{\helen{
\begin{equation}\label{def_OmegaI}
\Omega_{\mathcal{I}}:=\{y\in \mathbb{R}^d\setminus \Omega: |y-x|{\le}\delta \text{ for some } x\in \Omega\}.
\end{equation}
}}
Other boundary conditions can be used, for example if $\Omega$ is a
box region $\Omega = [0, L)^d$, we can impose the periodic boundary
conditions on $u_n$. The corresponding initial-boundary value problem
can be written down analogously.

The nonlocal diffusion operator is associated with the
Hilbert spaces given by
\[
S_{\delta}:=\left\{ u\in L^2(\Omega\cup \Omega_{\mathcal{I}}): \int_{
    \Omega\cup \Omega_{\mathcal{I}} }\int_{\Omega\cup
    \Omega_{\mathcal{I}} }
  \gamma_{\delta}(x,y)\left(u(y)-u(x)\right)^2 dxdy <\infty, \;
  u\big|_{\Omega_{\mathcal{I}}}=0  \right\}.
\]
The induced nonlocal energy norm is denoted as
$\|\cdot\|_{S_{\delta}}$:
{\helen{
\begin{equation}\label{def_norm}
\|u\|_{S_{\delta}}^2:= \int_{
    \Omega\cup \Omega_{\mathcal{I}} }\int_{\Omega\cup
    \Omega_{\mathcal{I}} }
  \gamma_{\delta}(x,y)\left(u(y)-u(x)\right)^2 dxdy,\quad \forall u\in S_{\delta}.
\end{equation}
}}
The properties of the nonlocal kernel as well as the nonlocal energy
norms are investigated and discussed in many recent works, we refer
the readers to \cite{Du2013a,Tian2013a,Tian2016b,Delia2014a} and
references therein. We remark {\helen{in particular that}} the nonlocal energy norm
satisfies the nonlocal Poincar\'{e} inequality
\cite[Eq. (2.11)]{Delia2014a}, which will be used in our analysis in
the sequel:
\begin{equation}\label{nonlocal_poincare}
\|u\|_{L^2(\Omega \cup \Omega_{\mathcal{I}})} \le
C_{d, \delta}\|u\|_{S_{\delta}}.
\end{equation}

The horizon parameter should be chosen according the physical property
of the underlying system. We are interested in the cases where the
horizon parameter changes across the domain of interest. In this case,
we shall couple two nonlocal diffusion operators with different
horizon {parameters} together. In the next section, we propose {\helen{a way of coupling based on the quasinonlocal idea.}} \helen{While we focus in this work the coupling of two nonlocal diffusions, we note that the idea can be extended to coupling of local and nonlocal diffusions, which will be considered in future works.}

\section{The quasinonlocal coupling}\label{section_qnl}

In this section, we propose a coupling scheme for multiscale nonlocal
diffusions, and prove the consistency, $L^2$ stability and the maximum
principle for the new coupling operator.

To better convey the idea, we will limit our discussion to one
dimensional case in this work. For simplicity of notation, we assume
that $\Omega = [-a, b]$ and it is divided into two parts $\Omega_1 =
[-a, 0]$ and $\Omega_2 = [0, b]$ with the interface at $0$. We assume
that within $\Omega_1$, the nonlocal diffusion kernel
$\gamma_{\delta_1}$ should be employed and $\gamma_{\delta_2}$ should
be used in $\Omega_2$. \helen{Without loss of generality, we assume $\delta_1 > \delta_2$ and thus
$\Omega_1$ is a more nonlocal region compared to $\Omega_2$. We further} assume that
$\delta_1=M\delta_2$ with $M\in \mathbb{N}$ being an integer. \helen{Our coupling strategy requires $M$ to be an integer; it might be interesting to study how to extend to arbitrary ratio of $\delta_1$ and $\delta_2$.}

To ensure the symmetry, we will derive the coupled nonlocal diffusion operator (i.e., negative of the force) from \helen{a} total energy. Recall that
{\helen{the total energy associated with the
kernel}} $\gamma_{\delta}$ is
\begin{equation}\label{total_nonlocal_energy}
E^{{\mathsf{tot}}, \delta}(u)
=\frac{1}{4}\int_{ \mathbb{R}}
\int_{\mathbb{R} } \gamma_{\delta}(\abs{y-x}) \left(u(y)-u(x)\right)^2~dxdy.
\end{equation}
{\helen{An intuitive coupling idea}} is to combine the {energies} associated with
$\delta_1$ and $\delta_2$, respectively. For instance, we use $\gamma_{\delta_2}$ if
$x, y \in \Omega_2$ and $\gamma_{\delta_1}$ otherwise. It can be
verified though, such coupling strategy does not satisfy the
patch-test consistency. The resulting operator (as {\helen{the first variation of the energy}}) $\mathcal{L}$ does not {\helen{annihilate affine
functions}},
which is of course a property one would hope a diffusion operator
should satisfy.

\subsection{Quasinonlocal coupling with geometric reconstruction}

To overcome the difficulty of naive coupling strategies, our
construction does not simply try to vary $\delta$ in
\eqref{total_nonlocal_energy} across the domain, but instead,
\helen{we follow} the geometric reconstruction reformulation for the
quasinonlocal method \cite{E:2006}. \helen{Let us first present the
  formulation of the coupling before explaining the ideas behind.}
{\helen{Our proposed total energy of the quasinonlocal coupling is
    given by
\begin{align}\label{total_coupled_energy}
E^{{\mathsf{tot}}, {\mathsf{qnl}}}(u)
=&\frac{1}{4}\int_{x,y \in \mathbb{R}, x\le 0 \text{ or } y\le 0}
 \gamma_{\delta_1}(\abs{y-x}) \left(u(y)-u(x)\right)^2~dx~dy\\
&\; +\frac{1}{4}\int_{x,y \in \mathbb{R}, x>0 \text{ and }  y>0} \gamma_{\delta_1}(\abs{y-x}) \nonumber\\
&\;\;
\cdot \frac{1}{M} \sum_{j=0}^{M-1} \left(u(x+\frac{j+1}{M}(y-x))-u(x+\frac{j}{M}(y-x))\right)^2\,
M^2 ~dx~dy.\nonumber
\end{align}
}} {\helen{ Note that the energy functional only uses one interaction
  kernel $\gamma_{\delta_1}$ (the more non-local one) throughout the
  entire domain $\Omega=\Omega_1\cup\Omega_2$.  In the subregion
  $\Omega_2$, instead of changing to the kernel $\gamma_{\delta_2}$,
  we change the difference $(u(y) - u(x))^2$ to an averaged
  version. This is coined
  as ``geometric reconstruction'', since it reconstructs $u(y) - u(x)$
  by differences of $u$ evaluated at points that are at most {of distance}
  $\delta_2$ to each other. Therefore, while a longer range kernel
  $\gamma_{\delta_1}$ is used in $\Omega_2$, the energy effectively
  still only involves interaction no further than $\delta_2$ distance.
}}
More concretely, to
link the {regions from} kernel $\gamma_{\delta_1}$ to $\gamma_{\delta_2}$
with $\delta_1 = M \delta_2$, in the local region $\Omega_2$, we
replace $ {\helen{\gamma_{\delta_2}}}(\abs{y-x})\left( u(y)-u(x)\right)^2$ by (see Figure~\ref{Figure:1D_qnl_demo} for an illustration)
\begin{multline*}
 \gamma_{\delta_1}(\abs{y-x}) \frac{1}{M} \sum_{j=0}^{M-1} \left(u\bigl(x+\frac{j+1}{M}(y-x)\bigr)-u\bigl(x+\frac{j}{M}(y-x)\bigr)\right)^2
\left(\frac{\delta_1}{\delta_2}\right)^2\\
= \gamma_{\delta_1}(\abs{y-x}) \frac{1}{M} \sum_{j=0}^{M-1}\left(u\bigl(x+\frac{j+1}{M}(y-x)\bigr)-u\bigl(x+\frac{j}{M}(y-x)\bigr)\right)^2\,
M^2.
\end{multline*}
Note that the kernel remains intact, and we adopt the \emph{geometric
  reconstruction} to replace
\begin{equation*}
  u(y) - u(x) \rightarrow \left(u \bigl(x + \frac{j+1}{M} (y - x)\bigr) -
    u \bigl(x + \frac{j}{M} (y - x)\bigr) \right) M
\end{equation*}
for $j = 0, \ldots, M-1$ \helen{and average the modulus square of the
  result over the $M$ possibilities}. Note that if
$\abs{x - y} \leq \delta_1$, the difference on the right is $u$
evaluated at points with distance at most
$\frac{\delta_1}{M} = \delta_2$; thus effectively we reconstruct the
difference $u(y) - u(x)$ by a more local interaction (and hence the
idea was referred as geometric reconstruction scheme in
\cite{E:2006}).  In fact, if we adopt such reconstruction everywhere
in the computational domain, we will get the nonlocal diffusion with
the kernel $\gamma_{\delta_2}$, as shown in the following Proposition.

\begin{figure}[htp]
\centering
\includegraphics[height=3cm, width=7 cm]{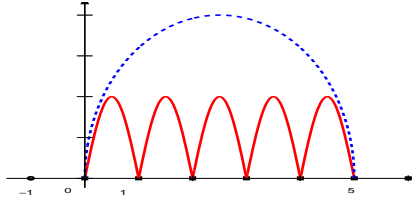}
\caption{An example of quasinonlocal construction in one dimension:
  $x\le 0$ is the region with kernel $\gamma_{\delta_1}$ and $x> 0$ is
  the region with kernel $\gamma_{\delta_2}$ ($M = 5$). The blue dash
  bond representing the difference $u(y) - u(x)$ is geometrically
  reconstructed by the red solid bonds with length at most
  $\delta_2$. }\label{Figure:1D_qnl_demo}
\end{figure}

\begin{proposition}\label{prop_approx_energy_consist}
  The energy functional defined on the entire domain
  $\Omega\cup\Omega_{\mathcal{I}}$ with geometric reconstruction
  \begin{align}
    E^{{\mathsf{tot}}, {\mathsf{gr}}}(u)
    := \frac{1}{4}&\int_{x,y \in \Omega\cup\Omega_{\mathcal{I}}}
    \gamma_{\delta_1}(\abs{y-x}) \label{total_approx_energy} \\
    & \frac{1}{M} \sum_{j=0}^{M-1}
    \left(u\left(x+\frac{j+1}{M}(y-x)\right)-
      u\left(x+\frac{j}{M}(y-x)\right)\right)^2 M^2 ~dxdy\nonumber
\end{align}
is equivalent to the total
nonlocal energy with diffusion kernel $\gamma_{\delta_2}$:
\begin{align*}
E^{{\mathsf{tot}}, \delta_2}(u):=& \frac{1}{4} \int_{x, y\in\Omega\cup\Omega_{\mathcal{I}}} \gamma_{\delta_2}(\abs{y-x})
\left(u(y)-u(x)\right)^2~dxdy.
\end{align*}
\end{proposition}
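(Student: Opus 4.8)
The plan is to decompose the inner sum of \eqref{total_approx_energy} and treat its $M$ terms separately. Fix $j\in\{0,1,\dots,M-1\}$ and perform the linear change of variables
\[
  (x,y)\longmapsto(\xi,\eta):=\Bigl(x+\tfrac{j}{M}(y-x),\ x+\tfrac{j+1}{M}(y-x)\Bigr),
\]
i.e.\ the map carrying the two endpoints of a ``long bond'' to the two endpoints of its $j$-th reconstructed ``short bond''. A short computation shows this is a linear automorphism of $\RR^{2}$ with constant Jacobian determinant $1/M$, with inverse $x=\xi-j(\eta-\xi)$, $y=\xi+(M-j)(\eta-\xi)$; in particular $y-x=M(\eta-\xi)$, so $\abs{y-x}=M\abs{\eta-\xi}$.

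Next I would substitute. Under this change of variables the $j$-th summand of $E^{{\mathsf{tot}},{\mathsf{gr}}}$ becomes
\[
  \frac{1}{4}\cdot\frac{1}{M}\cdot M^{2}\int_{\RR^{2}}\gamma_{\delta_1}\bigl(M\abs{\eta-\xi}\bigr)\,\bigl(u(\eta)-u(\xi)\bigr)^{2}\,M\,d\xi\,d\eta,
\]
where the scalar factors are the $\tfrac14$ and the averaging weight $\tfrac1M$ from \eqref{total_approx_energy}, the reconstruction factor $M^{2}$, and the Jacobian factor $M=\abs{\det}^{-1}$. It then remains to rescale the kernel: by \eqref{nonlocal_kernel} with $d=1$ and $\delta_1=M\delta_2$,
\[
  \gamma_{\delta_1}(Ms)=\frac{1}{\delta_1^{3}}\,\gamma\!\Bigl(\frac{Ms}{\delta_1}\Bigr)=\frac{1}{M^{3}\delta_2^{3}}\,\gamma\!\Bigl(\frac{s}{\delta_2}\Bigr)=\frac{1}{M^{3}}\,\gamma_{\delta_2}(s),\qquad s\ge 0.
\]
Inserting this, all the powers of $M$ cancel down to a prefactor $\tfrac{1}{4M}$, so the $j$-th summand equals $\tfrac1M\,E^{{\mathsf{tot}},\delta_2}(u)$, a value independent of $j$. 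Summing over $j=0,\dots,M-1$ then gives $E^{{\mathsf{tot}},{\mathsf{gr}}}(u)=E^{{\mathsf{tot}},\delta_2}(u)$.

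The step I expect to require the most care is reconciling the domains of integration: the substitution above is transparent when both integrals run over all of $\RR^{2}$, but the proposition is stated with integrals over $\Omega\cup\Omega_{\mathcal{I}}$, and the map $(x,y)\mapsto(\xi,\eta)$ does not preserve $(\Omega\cup\Omega_{\mathcal{I}})^{2}$. To handle this I would extend $u$ by zero outside $\Omega$ (consistent with $u\big|_{\Omega_{\mathcal{I}}}=0$) and check that enlarging the region of integration from $(\Omega\cup\Omega_{\mathcal{I}})^{2}$ to $\RR^{2}$ changes neither energy. For $E^{{\mathsf{tot}},{\mathsf{gr}}}$: if, say, $x$ lies outside $\Omega\cup\Omega_{\mathcal{I}}$, then $\operatorname{dist}(x,\Omega)>\delta_1$, while $\gamma_{\delta_1}(\abs{y-x})\neq 0$ forces $\abs{y-x}\le\delta_1$; hence every point $x+\tfrac{k}{M}(y-x)$ with $k=0,\dots,M$ lies on the segment from $x$ to $y$ and is at positive distance from $\Omega$, so $u$ vanishes at all of them and the integrand is zero. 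The analogous and easier estimate --- using $\delta_2\le\delta_1$ and $M\ge 2$, the case $M=1$ being trivial --- shows the contribution to $E^{{\mathsf{tot}},\delta_2}$ from outside $(\Omega\cup\Omega_{\mathcal{I}})^{2}$ likewise vanishes. With both energies thus rewritten as integrals over $\RR^{2}$, the change-of-variables argument of the previous paragraph applies verbatim, and the only remaining work, namely the Jacobian computation and the kernel rescaling, is routine.
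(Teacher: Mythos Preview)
Your proof is correct and follows essentially the same route as the paper. The paper carries out the substitution in two successive one-variable steps---first $z=x+\tfrac{j+1}{M}(y-x)$ in place of $y$, then $w=z-\tfrac{1}{j+1}(z-x)$ in place of $x$---but a quick check shows that the resulting $(w,z)$ is exactly your $(\xi,\eta)$, so the two arguments differ only in packaging; your treatment of the domain extension to $\RR^{2}$ is somewhat more explicit than the paper's one-line appeal to the compact support of the kernel and the zero volumetric boundary condition.
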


\begin{proof}
  Outside the support, the diffusion kernel is zero, {and because of the zero volumetric Dirichlet boundary condition}, so the total
  energy \eqref{total_approx_energy} can be recast as an integral of
  entire real line $\RR$:
 \begin{align}\label{energy_consist_eq1}
 E^{{\mathsf{tot}}, {\mathsf{gr}}}(u)
&\, = \frac{1}{4}\int_{x,y \in \RR }M \gamma_{\delta_1}(\abs{y-x})\nonumber\\
&\sum_{j=0}^{M-1} \left(u\left(x+\frac{j+1}{M}(y-x)\right)-u\left(x+\frac{j}{M}(y-x)\right)\right)^2
~dxdy.
 \end{align}
For each fixed $0\le j\le M-1$ in \eqref{energy_consist_eq1}, we introduce change of variables
\[
z:=x+\frac{j+1}{M}(y-x)=\left(1-\frac{j+1}{M}\right)x+\frac{j+1}{M}y
\]
to replace $y$, thus we obtain
\begin{equation*}
\begin{split}
\frac{1}{4} &\int_{x,y \in \RR } M \gamma_{\delta_1}(\abs{y-x})
{\helen{\left(u\left(x+\frac{j+1}{M}(y-x)\right)-u\left(x+\frac{j}{M}(y-x)\right)\right)^2}}
~dxdy\\
=&\frac{1}{4} \int_{x,z \in \RR }\gamma_{\delta_1} \left(\frac{M}{j+1}\abs{z-x}\right)\frac{M^2}{j+1}
\left(u\left(z\right)-u\left(z-\frac{1}{j+1}(z-x)\right)\right)^2~dzdx\\
=&\frac{1}{4} \int_{x,z \in \RR } \frac{1}{M^3}\gamma_{\delta_2} \left(\frac{1}{j+1}\abs{z-x}\right)\frac{M^2}{j+1}
\left(u\left(z\right)-u\left(z-\frac{1}{j+1}(z-x)\right)\right)^2~dzdx.
\end{split}
\end{equation*}
Next, let $w=z-\frac{1}{j+1}(z-x)=z+\frac{1}{j+1}(x-z)$ to replace $x$, we further get
\begin{equation*}
\begin{split}
\frac{1}{4} \int_{x,z \in \RR } & \frac{1}{M^3}\gamma_{\delta_2} \left(\frac{1}{j+1}\abs{z-x}\right)\frac{M^2}{j+1}
\left(u\left(z\right)-u\left(z-\frac{1}{j+1}(z-x)\right)\right)^2~dzdx\\
=&\frac{1}{4} \int_{w,z \in \RR } \frac{1}{M}\gamma_{\delta_2} \left(\abs{w-z}\right)
\left(u\left(z\right)-u\left(w\right)\right)^2~dzdw
=\frac{1}{M}E^{{\mathsf{tot}}, \delta_2}(u).
\end{split}
\end{equation*}
Summing up all $0\le j\le M-1$, we proved the proposition.
\end{proof}


{\helen{Therefore, }}to couple diffusion kernels $\gamma_{\delta_1}$ and
$\gamma_{\delta_2}$ with interface at $x=0$, we construct the total
coupled energy by using the geometric reconstruction when $x, y \in
\Omega_2$ and {\helen{obtain \eqref{total_coupled_energy}, recalled here}}
\begin{equation*}
\begin{split}
E^{{\mathsf{tot}}, {\mathsf{qnl}}}(u)
=&\frac{1}{4}\int_{x,y \in \mathbb{R}, x\le 0 \text{ or } y\le 0}
 \gamma_{\delta_1}(\abs{y-x}) \left(u(y)-u(x)\right)^2~dx~dy\\
&\quad +\frac{1}{4}\int_{x,y \in \mathbb{R}, x>0 \text{ and }  y>0} \gamma_{\delta_1}(\abs{y-x}) \\ &\qquad
\cdot \frac{1}{M} \sum_{j=0}^{M-1} \left(u(x+\frac{j+1}{M}(y-x))-u(x+\frac{j}{M}(y-x))\right)^2\,
M^2 ~dx~dy.
\end{split}
\end{equation*}
Clearly, when $\delta_1=\delta_2$, we have $M=1$ and thus $E^{{\mathsf{tot}}, {\mathsf{qnl}}}(u)=E^{{\mathsf{tot}}, \delta_1}(u).$

The first variation of the above functional \eqref{total_coupled_energy}
then gives us a coupled diffusion operator, for which we call the
quasinonlocal diffusion, denoted by $\calL^{{\mathsf{qnl}}}$. After
straightforward but somewhat lengthy calculation, which we defer to
the appendix, we obtain that
\begin{equation}\label{def_Lqnl}
\begin{split}
 & \calL^{{\mathsf{qnl}}} u_{\mathsf{qnl}}(x) := - \frac{\delta E^{\mathsf{tot}, \mathsf{qnl} }}{\delta u} u_{\mathsf{qnl}}(x) \\
  &=\begin{cases}
    \displaystyle
    \int_{\RR}\left(u_{ \mathsf{qnl}}(y)-u_{\mathsf{qnl}}(x)\right)\gamma_{\delta_1}(\abs{y-x})dy,  &\text{if }x\le 0, \\ 
    \displaystyle
    \int_{x-\delta_1<y<0}\left(u_{ \mathsf{qnl}}(y)-u_{\mathsf{qnl}}(x)\right)\gamma_{\delta_1}(\abs{y-x})dy \\
    \displaystyle
    \;+ {{ \frac{1}{M} }} \sum_{j=1}^{M-1} \left[\int_{(x-\frac{1}{j}x) <y<  (x+\frac{1}{j}x)}  \left(u_{\mathsf{qnl}}(y)-u_{\mathsf{qnl}}(x)\right)\gamma_{\delta_2}(\abs{y-x})\right]dy\\
    \displaystyle
    \quad\; + \frac{1}{M} \int_{(x-\frac{1}{M}x)< y}
    \left(u_{\mathsf{qnl}}(y)-u_{\mathsf{qnl}}(x)\right)\gamma_{\delta_2}(\abs{y-x})dy,
    &\text{if }0 < x< \delta_1,\\
    \displaystyle
    \int_{\RR}\left(u_{
        \mathsf{qnl}}(y)-u_{\mathsf{qnl}}(x)\right)\gamma_{\delta_2}(\abs{y-x})dy,
   & \text{if }x\ge \delta_1.
\end{cases}
\end{split}
\end{equation}
Note that in the region $\{x \le 0\}$, the diffusion is just the
nonlocal diffusion with kernel $\gamma_{\delta_1}$, while the
diffusion kernel is given by $\gamma_{\delta_2}$ in the region $\{x \ge
\delta_1\}$. The region in between $\{0 < x < \delta_1\}$ can be
viewed as a buffer region that connects the two nonlocal diffusion
operators.  We emphasize that the coupled diffusion operator
$\calL^{{\mathsf{qnl}}}$ is {\helen{self-adjoint}}, as it is derived  from
the coupled energy \eqref{total_coupled_energy} and can be easily
checked \helen{directly} from the definition; and hence the operator
$\calL^{{\mathsf{qnl}}}$ is of divergence form (following the
definition in \cite{CaffarelliChanVasseur:11}). This is the main
motivation behind our construction of the kernel.

The idea of the coupling strategy here is in fact borrowed from the
quasinonlocal coupling in the context of atomistic-to-continuum method
for crystalline solids, first proposed by \cite{Shimokawa:2004},
generalized and analyzed in
\cite{E:2006,MingYang,Shapeev2012a,luskin2011, OrtnerZhang}, in
particular the geometric reconstruction point of view \cite{E:2006}. Hence we {adopt} the name of the quasinonlocal
coupling strategy. 
Actually, as only pairwise interaction is involved in the current
context of nonlocal diffusion operators, the coupling is considerably
easier {than the atomistic-to-continuum coupling for crystals}. In
particular, the extension of atomistic-to-continuum quasinonlocal
coupling to higher dimension for general long range potential is still
an open challenge, despite progresses in \cite{E:2006, Shapeev2012a,
  OrtnerZhang}. While for nonlocal diffusion, we expect the extension
should not pose serious difficulties and will be considered in future
works.

The initial-boundary value problem for the quasinonlocal coupling is given as below:
\begin{equation}\label{QNL_divergence_form}
\begin{cases}
\displaystyle \frac{\partial u_{\mathsf{qnl}}}{\partial t}=\calL^{{\mathsf{qnl}}} u_{\mathsf{qnl}}(x), \quad \forall x \in \Omega,\\
u_{\mathsf{qnl}}(x,t)=0,\quad \forall x\in \Omega_{\mathcal{I}},\quad \forall t\ge 0, \\
u_{\mathsf{qnl}}(x,0)= u_{\mathsf{qnl}}^0(x), \text{ on } \Omega.
\end{cases}
\end{equation}
We now demonstrate the properties of {\helen{the quasinonlocal diffusion operator
$\calL^{{\mathsf{qnl}}}$}} in the following subsections.
\subsection{Consistency}
\begin{lemma} The quasinonlocal diffusion operator
  $\calL^{{\mathsf{qnl}}}$ defined in \eqref{def_Lqnl} is a {\helen{self-adjoint}} operator,
  and is consistent in the sense that $\calL^{\mathsf{qnl}} u = 0$ for
  any affine function $u$.
\end{lemma}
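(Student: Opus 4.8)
The plan is to verify the two claims separately: self-adjointness and consistency on affine functions.

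For self-adjointness, the cleanest route is to observe that $\calL^{\mathsf{qnl}}$ arises as (minus) the first variation of the quadratic energy functional $E^{\mathsf{tot},\mathsf{qnl}}$ defined in \eqref{total_coupled_energy}, which is manifestly a symmetric bilinear form (up to the quadratic normalization). Writing $E^{\mathsf{tot},\mathsf{qnl}}(u) = \tfrac12\, \qnlbilinear(u,u)$ for a symmetric bilinear form $\qnlbilinear(\cdot,\cdot)$, one has $\la -\calL^{\mathsf{qnl}} u, v\ra_{L^2} = \qnlbilinear(u,v) = \qnlbilinear(v,u) = \la -\calL^{\mathsf{qnl}} v, u\ra_{L^2}$, which is exactly self-adjointness on $S_{\delta_1}$ (with the zero volumetric boundary condition on $\Omega_{\mathcal I}$ ensuring boundary terms vanish). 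Alternatively, and perhaps more convincingly for the reader, I would verify self-adjointness directly from the explicit formula \eqref{def_Lqnl}: write $\calL^{\mathsf{qnl}} u(x) = \int_{\RR}\bigl(u(y)-u(x)\bigr)\qnlkernel(x,y)\,dy$ where the effective kernel $\qnlkernel(x,y)$ is pieced together from $\gamma_{\delta_1}$ and (in the buffer/local region) the reconstructed $\gamma_{\delta_2}$ contributions, and then check $\qnlkernel(x,y)=\qnlkernel(y,x)$ by inspecting each of the cases in \eqref{def_Lqnl}; symmetry of the kernel immediately yields self-adjointness of the operator with respect to the $L^2$ inner product.

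For consistency, let $u(x) = \alpha x + \beta$ be affine. In each region I evaluate $\calL^{\mathsf{qnl}} u(x)$ using \eqref{def_Lqnl}. For $x\le 0$ and for $x\ge\delta_1$ the operator is just the standard nonlocal diffusion $\calL_{\delta_1}$ resp.\ $\calL_{\delta_2}$, and the standard argument applies: $\int_{\RR}\bigl(u(y)-u(x)\bigr)\gamma_\delta(\abs{y-x})\,dy = \alpha\int_{\RR}(y-x)\gamma_\delta(\abs{y-x})\,dy = 0$ because $\gamma_\delta$ is even in $y-x$, so the integrand is odd and integrates to zero over $\RR$; the constant $\beta$ drops out of the difference. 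The only real content is the buffer region $0 < x < \delta_1$, where three terms appear. I would substitute $u(y)-u(x) = \alpha(y-x)$ into each of the three integrals in \eqref{def_Lqnl} and check that they sum to zero. The cleanest way to do this is not to evaluate each integral over its truncated domain, but to use the energy/first-variation origin: since $E^{\mathsf{tot},\mathsf{qnl}}$ agrees with the geometric-reconstruction energy $E^{\mathsf{tot},\mathsf{gr}}$ on the $\Omega_2\times\Omega_2$ part and with the $\gamma_{\delta_1}$ energy elsewhere, and since affine functions are in the kernel of the first variation of the full-domain reconstruction energy (by Proposition~\ref{prop_approx_energy_consist}, whose energy equals the $\gamma_{\delta_2}$ nonlocal energy, whose first variation $\calL_{\delta_2}$ annihilates affine functions) and of the first variation of the $\gamma_{\delta_1}$ energy ($\calL_{\delta_1}$ annihilates affine functions), a localization/cancellation argument gives the result. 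Concretely, I would write $\calL^{\mathsf{qnl}} u(x)$ for $0<x<\delta_1$ as a difference of two expressions each of which, when the integration domains are completed to all of $\RR$, annihilates affine $u$; the "missing" pieces are precisely the truncations recorded in \eqref{def_Lqnl}, and one checks these truncation corrections cancel in the sum.

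The main obstacle is bookkeeping in the buffer region: the domains of integration in the three terms of \eqref{def_Lqnl} (namely $x-\delta_1<y<0$, the symmetric intervals $x-\tfrac1j x < y < x+\tfrac1j x$ for $j=1,\dots,M-1$, and $x-\tfrac1M x < y$) overlap and are truncated asymmetrically about $x$, so the odd-integrand-over-symmetric-domain trick does not apply term by term. I expect the right approach is to group the $\gamma_{\delta_2}$ terms so that, together, they reconstruct $\int_{\RR}(u(y)-u(x))\gamma_{\delta_2}(\abs{y-x})\,dy$ plus correction integrals over the region $y<0$ (which is where the $\gamma_{\delta_1}$ term "takes over"), and then see that these corrections combine with the $\gamma_{\delta_1}$ term into something of the form $\int_{y<0}\bigl[\text{(sum of weights)} \times (\text{odd in }y-x)\bigr]$ with the weights summing consistently so the total vanishes for affine $u$. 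This is exactly the kind of cancellation that the energy-consistency Proposition~\ref{prop_approx_energy_consist} is designed to make transparent, so I would lean on it rather than brute-force the truncated integrals.
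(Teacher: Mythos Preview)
Your self-adjointness argument and the handling of the cases $x\le 0$ and $x\ge\delta_1$ are fine and match the paper's approach. The gap is in the buffer region $0<x<\delta_1$.

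You correctly describe the middle family of integration domains as ``the symmetric intervals $x-\tfrac1j x<y<x+\tfrac1j x$,'' yet you then assert that ``the odd-integrand-over-symmetric-domain trick does not apply term by term.'' It does apply to precisely those middle terms: for each $j=1,\dots,M-1$ the domain is symmetric about $x$ and the integrand $\alpha(y-x)\gamma_{\delta_2}(\abs{y-x})$ is odd in $y-x$, so each of these integrals vanishes individually. This is the key simplification the paper uses, and you have overlooked it. Once the middle sum is gone, only two integrals remain, and the paper dispatches them in two lines: in the third integral substitute $\hat s=M(y-x)$ and use the scaling $\gamma_{\delta_2}(s)=M^3\gamma_{\delta_1}(Ms)$ to get $\int_{-x}^{\infty}\alpha\hat s\,\gamma_{\delta_1}(\abs{\hat s})\,d\hat s$; in the first integral substitute $s=y-x$ to get $\int_{-\delta_1}^{-x}\alpha s\,\gamma_{\delta_1}(\abs{s})\,ds$. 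The two concatenate to $\int_{-\delta_1}^{\delta_1}\alpha s\,\gamma_{\delta_1}(\abs{s})\,ds=0$.

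Your proposed detour through Proposition~\ref{prop_approx_energy_consist} is not obviously workable as sketched. That proposition concerns the geometric reconstruction applied on the entire domain, whereas the buffer-region formula in \eqref{def_Lqnl} comes from applying the reconstruction only on $\{x>0,\,y>0\}$; the ``correction integrals over the region $y<0$'' you anticipate do not line up in any transparent way with the $\gamma_{\delta_1}$ term on $x-\delta_1<y<0$, because one lives at scale $\delta_2$ and the other at scale $\delta_1$. Making that cancellation precise would require exactly the scaling/change-of-variables computation above, so the energy route buys you nothing here. The direct calculation is short once you notice the middle sum vanishes; that observation is the missing idea.
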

\begin{proof}
  The \helen{self-adjointness has been shown above already.} To check the consistency, consider an
  affine function $u(x)=Fx+u_0$ with both $F$ and $u_0$ {being}
  constants.  Thus, we have
  \[
  u(y)-u(x)=F(y-x).
  \]
  Plugging this into the definition of $\calL^{\mathsf{qnl}}$
  \eqref{def_Lqnl}, we have the following three cases.
\begin{enumerate}
\item[Case I:] $x\le 0$, then
\[
\calL^{\mathsf{qnl}}u(x)=
\int_{\RR}F\left(y-x\right)\gamma_{\delta_1}(\abs{y-x})dy
= \int_{\RR} F s  \gamma_{\delta_1}(\abs{s}) ds = 0,
\]
which comes from the symmetry of the kernel.
\item[Case II:] $0< x< \delta_1$, then
\begin{equation}\label{consist_eq1}
\begin{split}
\calL^{\mathsf{qnl}}u(x)=&
\int_{x-\delta_1<y<0}F\left(y-x\right)\gamma_{\delta_1}(\abs{y-x})dy \\
& \quad +\frac{1}{M} \sum_{j=1}^{M-1} \left[\int_{(x-\frac{1}{j}x) <y<  (x+\frac{1}{j}x)}  F\left(y-x\right)\gamma_{\delta_2}(\abs{y-x})\right]dy\\
&\qquad \qquad \qquad + \int_{(x-\frac{1}{M}x)< y} \frac{1}{M} F\left(y-x\right)\gamma_{\delta_2}(\abs{y-x})dy.
\end{split}
\end{equation}
Because of the symmetry of integral domain and the symmetry of the kernel $\gamma_{\delta_2}$, the second term in \eqref{consist_eq1} equals zero. Thus, \eqref{consist_eq1} becomes
\begin{equation}\label{consist_eq2}
\begin{split}
&\calL^{\mathsf{qnl}}u(x)\\
&=
\int_{x-\delta_1<y<0}F\left(y-x\right)\gamma_{\delta_1}(\abs{y-x})dy\\
&\qquad\quad + \frac{1}{M} \int_{(x-\frac{1}{M}x)< y} F\left(y-x\right)\gamma_{\delta_2}(\abs{y-x})dy\\
 &=\int_{x-\delta_1}^{0} F\left(y-x\right)\gamma_{\delta_1}(\abs{x-y})dy\\
 &\qquad\quad+\frac{1}{M}\int_{(x-\frac{1}{M}x)< y}  F\left(y-x\right) M^3 \gamma_{\delta_1}\left(\abs{M(y-x)}\right)dy\\
 &= \int_{-\delta_1}^{-x}  F s \gamma_{\delta_1}(\abs{s})ds
 + \int_{-x}^{\infty }  F \hat{s}  \gamma_{\delta_1}(\abs{\hat{s}})d\hat{s}
 =\int_{-\delta_1}^{\delta_1}F s \gamma_{\delta_1}(\abs{s})ds=0,
\end{split}
\end{equation}
where we introduce the change of variables $s:=y-x$ and $\hat{s}:=M(y-x)$.

\item[Case III:] $x\ge\delta_1$, then
\[
\calL^{\mathsf{qnl}}u(x)=
\int_{\RR}F\left(y-x\right)\gamma_{\delta_2}(\abs{y-x})dy = \int_{\RR}
F s \gamma_{\delta_2}(\abs{s}) d s = 0,
\]
which again comes from the symmetry of the kernel.
\end{enumerate}

Summarizing all the three cases, we thus obtain the patch-test consistency of the coupled diffusion operator $\calL^{\mathsf{qnl}}$.
\end{proof}

\subsection{Stability analysis of QNL coupling}\label{subsection_stab}

Let us now consider  the stability of the QNL coupling
\eqref{QNL_divergence_form} and \eqref{def_Lqnl}.  As the coupled
diffusion operator $\calL^{\mathsf{qnl}}$ is derived from the total
energy \eqref{total_coupled_energy}, the bilinear form of the QNL
coupling operator is simply given by
\begin{equation}\label{QNL_bilinear}
\begin{aligned}
  b_{\mathsf{qnl}}(u,v)
  = & \int_{x,y \in \mathbb{R}, x\le 0 \text{ or } y\le 0}
 \gamma_{\delta_1}(\abs{y-x}) \left(u(y)-u(x)\right)\left(v(y)-v(x)\right) dxdy\\
& + \int_{x,y \in \mathbb{R}, x>0 \text{ and }  y>0} M \gamma_{\delta_1}(\abs{y-x})
\\
& \qquad\qquad\sum_{j=0}^{M-1} \left(u(x+\frac{j+1}{M}(y-x))-u(x+\frac{j}{M}(y-x))\right) \\
&\qquad\qquad \qquad \qquad\cdot \left(v(x+\frac{j+1}{M}(y-x))-v(x+\frac{j}{M}(y-x))\right) dxdy.
\end{aligned}
\end{equation}
The induced \helen{inner product space and norm}
associated with $\calL^{{\mathsf{qnl}}}$ are
\begin{equation}
  \begin{aligned}\label{qnl_space}
& S_{\mathsf{qnl}}
:=\left\{
u\in L^2(\Omega\cup \Omega_{\mathcal{I}}): b_{\mathsf{qnl}}(u,u) <\infty, \quad u\big|_{\Omega_{\mathcal{I}}}=0
\right\};\,\\
& {\helen{\|u\|_{S_{\mathsf{qnl}}}^2:}}= b_{\mathsf{qnl}}(u,u), \quad \forall u\in S_{\mathsf{qnl}}.
\end{aligned}
\end{equation}
Also recall the bilinear form of the nonlocal kernel
$\gamma_{\delta_1}$:
\begin{equation}\label{nonlocal_bilinear}
\nlbilinear(u,v)
= \int_{\Omega\cup \Omega_{\mathcal{I}}}dx\int_{\Omega\cup \Omega_{\mathcal{I}}} \gamma_{\delta_1}(x,y)\left(u(y)-u(x)\right)\left(v(y)-v(x)\right)dy.
\end{equation}


\begin{proposition}[Stability] \label{lemma_L2_Nonlocal_vs_QNL}
  For the nonlocal kernels
  $\gamma_{\delta_1}(s)=\frac{1}{\delta_1^{3}}
  \gamma(\frac{s}{\delta_1})$ and
  $\gamma_{\delta_2}(s)=\frac{1}{\delta_2^{3}}
  \gamma(\frac{s}{\delta_2})$ with $\delta_1=M\delta_2$ for $M$ {being} a
  given integer, where $\gamma(s)$ is a symmetric scaleless decreasing
  kernel supported on $[0,1]$, we have
  \begin{equation}\label{bilinear_QNL_vs_nonlocal}
    \qnlbilinear(u,u)
    \ge \nlbilinear(u,u) =
    \|u\|_{S_{\delta_1}}^2.
  \end{equation}
\end{proposition}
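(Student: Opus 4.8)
The plan is to compare $\qnlbilinear(u,u)$ and $\nlbilinear(u,u)$ region by region, adopting the same convention used in the proof of Proposition~\ref{prop_approx_energy_consist}: extend $u$ by zero outside $\Omega\cup\Omega_{\mathcal{I}}$, so that by the compact support of $\gamma_{\delta_1}$ both bilinear forms may be written as integrals over $\RR\times\RR$. Split $\RR\times\RR$ into $A:=\{(x,y):x\le 0\text{ or }y\le 0\}$ and its complement $A^c=\{(x,y):x>0\text{ and }y>0\}$. The first summand of $\qnlbilinear(u,u)$ is exactly $\int_A \gamma_{\delta_1}(\abs{y-x})(u(y)-u(x))^2\,dxdy$, which is precisely the restriction of $\nlbilinear(u,u)$ to $A$; these two contributions cancel in the comparison. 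Hence the statement reduces to showing that the second summand of $\qnlbilinear(u,u)$, namely
\[
\int_{A^c} M\,\gamma_{\delta_1}(\abs{y-x})\sum_{j=0}^{M-1}\bigl(u(z_{j+1})-u(z_j)\bigr)^2\,dxdy,
\qquad z_j:=x+\tfrac{j}{M}(y-x),
\]
dominates $\int_{A^c}\gamma_{\delta_1}(\abs{y-x})(u(y)-u(x))^2\,dxdy$.

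This last domination follows from a pointwise inequality. For fixed $(x,y)$ the points $z_0=x,z_1,\dots,z_M=y$ partition the segment from $x$ to $y$, so the difference telescopes, $u(y)-u(x)=\sum_{j=0}^{M-1}\bigl(u(z_{j+1})-u(z_j)\bigr)$, and the Cauchy--Schwarz inequality applied to this $M$-term sum gives
\[
\bigl(u(y)-u(x)\bigr)^2=\Bigl(\sum_{j=0}^{M-1}\bigl(u(z_{j+1})-u(z_j)\bigr)\Bigr)^2\le M\sum_{j=0}^{M-1}\bigl(u(z_{j+1})-u(z_j)\bigr)^2.
\]
Multiplying by $\gamma_{\delta_1}(\abs{y-x})\ge 0$ and integrating over $A^c$ yields the desired inequality between the second summand of $\qnlbilinear(u,u)$ and the $A^c$-part of $\nlbilinear(u,u)$; adding back the (equal) contributions over $A$ gives $\qnlbilinear(u,u)\ge\nlbilinear(u,u)$, and $\nlbilinear(u,u)=\|u\|_{S_{\delta_1}}^2$ is immediate from the definition of the energy norm.

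I do not expect a serious obstacle here: the whole argument rests on recognizing the telescoping/Cauchy--Schwarz structure, and it uses only the nonnegativity of $\gamma_{\delta_1}$ together with the integer ratio $M=\delta_1/\delta_2$ — the decreasing, scaleless structure of $\gamma$ plays no role in this particular estimate. The only points needing a little care are purely bookkeeping: first, that for $(x,y)\in A^c$ all reconstruction points $z_j$ lie in the convex hull of $\{x,y\}\subset(0,\infty)$, hence remain inside $\Omega\cup\Omega_{\mathcal{I}}$ when $x,y$ do and otherwise sit where $u=0$, so the pointwise inequality is always legitimate; and second, justifying that both forms may be recast as integrals over $\RR\times\RR$, which is exactly the reduction already carried out for Proposition~\ref{prop_approx_energy_consist}. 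As a byproduct one obtains $S_{\mathsf{qnl}}\subset S_{\delta_1}$.
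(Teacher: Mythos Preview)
Your proposal is correct and follows essentially the same approach as the paper: both split the domain into $\{x\le 0\text{ or }y\le 0\}$ (where the two forms agree) and $\{x>0\text{ and }y>0\}$, then use the telescoping identity $u(y)-u(x)=\sum_{j=0}^{M-1}(u(z_{j+1})-u(z_j))$ together with the elementary inequality $(a_1+\dots+a_M)^2\le M(a_1^2+\dots+a_M^2)$ to bound the second piece. Your additional bookkeeping remarks (zero extension, location of the reconstruction points) are sound and go slightly beyond what the paper writes out explicitly.
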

\begin{proof}
From the definition of the bilinear form \eqref{QNL_bilinear}, we have
\begin{align}\label{1D_stab_eq1}
  b_{\mathsf{qnl}}(u,u)
  = & \int_{x,y \in \Omega\cup \Omega_{\mathcal{I}}, x\le 0 \text{ or } y\le 0}
 \gamma_{\delta_1}(\abs{y-x}) \left(u(y)-u(x)\right)^2dxdy\nonumber\\
&\; + \int_{x,y \in \Omega\cup \Omega_{\mathcal{I}}, x>0 \text{ and }  y>0} M \gamma_{\delta_1}(\abs{y-x}) \\
&\qquad\cdot\sum_{j=0}^{M-1} \left(u(x+\frac{j+1}{M}(y-x))-u(x+\frac{j}{M}(y-x))\right)^2\,dxdy.\nonumber
\end{align}
Comparing the difference between \eqref{1D_stab_eq1} and \eqref{nonlocal_bilinear}, {\helen{in order to obtain the statement \eqref{bilinear_QNL_vs_nonlocal}}}
 we only need show that
\begin{align}\label{1D_stab_eq2}
&\int_{x,y \in \Omega\cup \Omega_{\mathcal{I}}, x>0 \text{ and }  y>0}M \gamma_{\delta_1}(\abs{y-x})\\
&\qquad\qquad\quad\cdot\sum_{j=0}^{M-1} \left(u(x+\frac{j+1}{M}(y-x))-u(x+\frac{j}{M}(y-x))\right)^2dxdy\nonumber\\
&\quad \ge \int_{x,y \in \Omega\cup \Omega_{\mathcal{I}}, x>0 \text{ and }  y>0}\gamma_{\delta_1}(\abs{y-x})\left(u(y)-u(x)\right)^2dxdy.\nonumber
\end{align}
Since
\[
(a_1+\dots+a_{M})^2\le M(a_1^2+\dots+a_M^2),
\]
we have that
\begin{align*}
&\int_{x,y \in \Omega\cup \Omega_{\mathcal{I}}, x>0 \text{ and }  y>0}\gamma_{\delta_1}(\abs{y-x})\nonumber\\
& \quad \qquad\cdot M\sum_{j=0}^{M-1} \left(u(x+\frac{j+1}{M}(y-x))-u(x+\frac{j}{M}(y-x))\right)^2dxdy\nonumber\\
& \quad \ge \int_{x,y \in \Omega\cup \Omega_{\mathcal{I}}, x>0 \text{ and }  y>0}\gamma_{\delta_1}(\abs{y-x})\nonumber\\
&\qquad\quad
\cdot\left(\sum_{j=0}^{M-1} \left( u(x+\frac{j+1}{M}(y-x))-u(x+\frac{j}{M}(y-x)) \right)\right)^2dxdy\nonumber\\
&\quad = \int_{x,y \in \Omega\cup \Omega_{\mathcal{I}}, x>0 \text{ and }  y>0}\gamma_{\delta_1}(\abs{y-x})
\cdot\left( u(y)-u(x) \right)^2dxdy,
\end{align*}
which is exactly what we want in \eqref{1D_stab_eq2}. Therefore, we proved the proposition \eqref{bilinear_QNL_vs_nonlocal}.
\end{proof}

Since all nonlocal norm $\|\cdot \|_{\mathcal{S}_{\delta}}$ satisfies the Poincar\'e inequality \cite{Du2012a,Tian2016c}
\[
\|u\|_{{S}_{\delta}} \ge c \|u\|_{L^2(\Omega\cup\Omega_{\mathcal{I
    }})},
\]
as an immediate corollary of Proposition~\ref{lemma_L2_Nonlocal_vs_QNL}, we have the following $L^2$ stability
for the QNL coupling.
\begin{corollary}
The QNL coupling is $L^2$ stable:
\[
\qnlbilinear(u,u)\ge c \|u\|_{L^2 (\Omega\cup\Omega_{\mathcal{I }})}^2,\quad\forall u\in S_{\mathsf{qnl}}.
\]
\end{corollary}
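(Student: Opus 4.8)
The plan is to simply chain together the two ingredients already assembled in this section; the statement is truly a corollary and the work is bookkeeping rather than new estimates. First I would observe that Proposition~\ref{lemma_L2_Nonlocal_vs_QNL} gives, for every $u \in S_{\mathsf{qnl}}$,
\[
\qnlbilinear(u,u) \ge \nlbilinear(u,u) = \|u\|_{S_{\delta_1}}^2 .
\]
In particular the right-hand side is finite, so $u \in S_{\delta_1}$ and the nonlocal energy norm $\|u\|_{S_{\delta_1}}$ is meaningful; moreover $u$ vanishes on $\Omega_{\mathcal{I}}$ by the definition \eqref{qnl_space} of $S_{\mathsf{qnl}}$, so $u$ is an admissible function for the space $S_{\delta_1}$ and its associated norm.

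Second, I would invoke the nonlocal Poincar\'e inequality \eqref{nonlocal_poincare} with horizon $\delta = \delta_1$: there is a constant $C_{d,\delta_1} < \infty$, depending only on the spatial dimension (here $d=1$), the domain $\Omega$, and the horizon $\delta_1$, such that
\[
\|u\|_{L^2(\Omega \cup \Omega_{\mathcal{I}})} \le C_{d,\delta_1} \, \|u\|_{S_{\delta_1}} .
\]
Squaring and combining with the previous display yields
\[
\qnlbilinear(u,u) \ge \|u\|_{S_{\delta_1}}^2 \ge C_{d,\delta_1}^{-2}\,\|u\|_{L^2(\Omega\cup\Omega_{\mathcal{I}})}^2 ,
\]
so the claimed estimate holds with $c = C_{d,\delta_1}^{-2} > 0$.

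There is essentially no obstacle here beyond matching hypotheses: one needs the standing assumptions of Proposition~\ref{lemma_L2_Nonlocal_vs_QNL} (symmetric, radially decreasing kernel $\gamma$ supported on $[0,1]$, with $\delta_1 = M\delta_2$, $M \in \mathbb{N}$) to be in force, and one must cite the version of the nonlocal Poincar\'e inequality stated for the kernel $\gamma_{\delta_1}$ together with the zero volumetric constraint on $\Omega_{\mathcal{I}}$ — both already assumed. The only mildly delicate point worth a remark is that the constant is naturally tied to the \emph{larger} horizon $\delta_1$ (via the embedding $S_{\mathsf{qnl}} \hookrightarrow S_{\delta_1}$ furnished by the Proposition); since $\delta_1$ is fixed throughout, this causes no difficulty, and no uniformity in $\delta_1$ is being claimed.
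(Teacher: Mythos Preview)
Your proposal is correct and follows exactly the paper's approach: combine Proposition~\ref{lemma_L2_Nonlocal_vs_QNL} with the nonlocal Poincar\'e inequality \eqref{nonlocal_poincare} for the kernel $\gamma_{\delta_1}$. The extra care you take in verifying $u\in S_{\delta_1}$ and identifying the constant as $c=C_{d,\delta_1}^{-2}$ is more explicit than the paper, but the argument is the same.
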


Besides the $L^2$ stability (coercivity) of the bilinear form, we also
have the maximum principle (\textit{i.e.}, $L^{\infty}$ stability) of
the coupled diffusion since {the kernels are} non-negative.
\begin{proposition}[Maximum principle] \label{theorem_max_principle}
  {\helen{If $u\in C^1(\Omega)\cap \overline{C(\Omega\cup\Omega_{\mathcal{I}})}$ }}, then the following Dirichlet initial-boundary value problem with
  quasinonlocal diffusion
  \begin{equation*}
    \begin{cases}
      \displaystyle \frac{\partial u_{\mathsf{qnl}}}{\partial t}=\calL^{{\mathsf{qnl}}} u_{\mathsf{qnl}}(x)+f(x),\quad \forall x\in \Omega, \\
      u_{\mathsf{qnl}}(x,t)=g_d(x, t),\quad \forall x\in \Omega_{\mathcal{I}},\quad \forall {\helen{T\ge t\ge 0}}, \\
      u_{\mathsf{qnl}}(x,0)= u_{\mathsf{qnl}}^0(x), \quad \forall x\in
      \Omega
    \end{cases}
  \end{equation*}
  satisfies the maximum principle. That is
  \begin{equation*}
    u(x,t)\leq \max\left\{g_d(x, s)\Big|_{x\in\Omega_{\mathcal{I}}, \; 0 \leq s \leq t},\; u_{\mathsf{qnl}}^0(x)\Big|_{x\in \Omega}\right\}
  \end{equation*}
  if $f(x)\leq 0$ for $\forall\, x\in \Omega$, and similarly
  \begin{equation*}
    u(x,t )\geq \min\left\{g_d(x, s)\Big|_{x\in\Omega_{\mathcal{I}}, \; 0 \leq s \leq t},\; u_{\mathsf{qnl}}^0(x)\Big|_{x\in \Omega}\right\}
  \end{equation*}
  if $f(x)\geq 0$ for $\forall\, x\in \Omega$.
\end{proposition}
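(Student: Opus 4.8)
The plan is to follow the classical route for parabolic maximum principles, with the nonlocal operator playing the role that the Laplacian plays in the local case. The crucial structural fact is that in each of the three regimes of \eqref{def_Lqnl} the quasinonlocal operator can be written in the form
\[
\calL^{\mathsf{qnl}} u(x) = \int_{\RR}\bigl(u(y)-u(x)\bigr)\,K(x,y)\,dy
\]
with a \emph{nonnegative} kernel $K(x,\cdot)$: for $x\le 0$ one has $K=\gamma_{\delta_1}(\abs{\cdot-x})$, for $x\ge\delta_1$ one has $K=\gamma_{\delta_2}(\abs{\cdot-x})$, and for $0<x<\delta_1$ the kernel $K(x,\cdot)$ is the sum of $\gamma_{\delta_1}(\abs{y-x})$ restricted to $x-\delta_1<y<0$, of $\tfrac1M\gamma_{\delta_2}(\abs{y-x})$ restricted to each sub-interval $x-\tfrac1j x<y<x+\tfrac1j x$ for $1\le j\le M-1$, and of $\tfrac1M\gamma_{\delta_2}(\abs{y-x})$ restricted to $y>x-\tfrac1M x$ — each piece nonnegative. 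Consequently, if at some fixed time $u(\cdot,t)$ attains its maximum over $\Omega\cup\Omega_{\mathcal{I}}$ at a point $x_0\in\Omega$, then $u(y,t)-u(x_0,t)\le 0$ for every $y$, hence $\calL^{\mathsf{qnl}}u(x_0,t)\le 0$; symmetrically $\calL^{\mathsf{qnl}}u(x_0,t)\ge 0$ at a spatial minimum. Checking this sign property uniformly across the three regimes — in particular verifying that every one of the $M{+}1$ terms in the buffer region $\{0<x<\delta_1\}$ carries a nonnegative weight and a nonnegative kernel — is the one place where the specific structure of $\calL^{\mathsf{qnl}}$ enters, and it is the main (though not difficult) obstacle.

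Granting this, I would first reduce to a \emph{strict} differential inequality. Fix $T>0$ and $\varepsilon>0$ and set $w(x,t):=u_{\mathsf{qnl}}(x,t)-\varepsilon t$. Since $\calL^{\mathsf{qnl}}$ annihilates functions that are constant in $x$ (every term is a difference), $w$ satisfies $\partial_t w=\calL^{\mathsf{qnl}}w+f-\varepsilon$ on $\Omega\times(0,T]$, so that $\partial_t w-\calL^{\mathsf{qnl}}w=f-\varepsilon\le -\varepsilon<0$ whenever $f\le 0$. The $C^1$ regularity assumed on $u_{\mathsf{qnl}}$ guarantees that the integrals defining $\calL^{\mathsf{qnl}}u_{\mathsf{qnl}}$ converge and that $w(x_0,\cdot)$ is differentiable in $t$.

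Next comes the contradiction argument. Fix $t^{\ast}\in(0,T]$ and let $C(t^{\ast}):=\max\bigl(\{\,g_d(x,s):x\in\Omega_{\mathcal{I}},\,0\le s\le t^{\ast}\,\}\cup\{\,u_{\mathsf{qnl}}^0(x):x\in\Omega\,\}\bigr)$. By the assumed continuity of $u_{\mathsf{qnl}}$ on $\overline{\Omega\cup\Omega_{\mathcal{I}}}\times[0,t^{\ast}]$ and compactness, $w$ attains its maximum over this set at some $(x_0,t_0)$. On $\Omega_{\mathcal{I}}$ we have $w=g_d-\varepsilon s\le C(t^{\ast})$, and at $s=0$ we have $w=u_{\mathsf{qnl}}^0\le C(t^{\ast})$; so if we assume, for contradiction, that $\max w>C(t^{\ast})$, then necessarily $x_0\in\Omega$ and $t_0>0$. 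At such a point $w(\cdot,t_0)$ has a spatial maximum at $x_0$, whence $\calL^{\mathsf{qnl}}w(x_0,t_0)\le 0$ by the first paragraph, while $w(x_0,\cdot)$ has a maximum at $t_0\in(0,t^{\ast}]$, giving $\partial_t w(x_0,t_0)\ge 0$. Combining, $0\le\partial_t w(x_0,t_0)=\calL^{\mathsf{qnl}}w(x_0,t_0)+f(x_0)-\varepsilon\le -\varepsilon<0$, a contradiction. Therefore $w\le C(t^{\ast})$ on $\Omega\times[0,t^{\ast}]$, i.e. $u_{\mathsf{qnl}}(x,t)\le C(t^{\ast})+\varepsilon t$ for $t\le t^{\ast}$; letting $\varepsilon\downarrow 0$ and then evaluating at $t=t^{\ast}$, which was arbitrary, yields $u_{\mathsf{qnl}}(x,t)\le C(t)$, the claimed upper bound.

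Finally, the lower bound follows by applying the upper bound to $\tilde u:=-u_{\mathsf{qnl}}$, which solves the same type of problem with source $-f\le 0$ (when $f\ge 0$), boundary data $-g_d$, and initial data $-u_{\mathsf{qnl}}^0$; this gives $-u_{\mathsf{qnl}}(x,t)\le\max\{-g_d,-u_{\mathsf{qnl}}^0\}=-\min\{g_d,u_{\mathsf{qnl}}^0\}$, i.e. the stated minimum principle. I expect no serious difficulty beyond the bookkeeping in the first paragraph; the one point to handle with a little care is ensuring that the maximizing point $(x_0,t_0)$ indeed lies in the parabolic interior $\Omega\times(0,T]$, which is exactly where the strict inequality from the $\varepsilon$-shift is used.
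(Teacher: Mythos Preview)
Your proposal is correct and follows essentially the same route as the paper: the $\varepsilon$-shift $w=u-\varepsilon t$ to obtain a strict inequality, the observation that the nonnegativity of all the kernel pieces in \eqref{def_Lqnl} forces $\calL^{\mathsf{qnl}}w\le 0$ at a spatial maximum, and the standard contradiction at an interior-in-time maximizer. The only cosmetic difference is that the paper works on the truncated cylinder $Q_{T-\varepsilon}$ and passes to the limit, whereas you work directly on $[0,t^\ast]$ and send $\varepsilon\downarrow 0$; both arrive at the same conclusion.
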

\helen{\begin{proof} Let us consider only the case $f(x)\le 0$ since
    the other case is similar.  We denote
    $Q_{T}:=\left(\Omega\cup \Omega_{\mathcal{I}}\right)\times (0,
    T)$.
    Fixing an arbitrary small positive number $\epsilon >0$, we define
    an auxiliary function $w:=u-\epsilon t$. We will first study $w$
    and then conclude information about $u$ by taking the limit
    $\epsilon \downarrow 0$.

Clearly, on $Q_{T}$, we have
\begin{equation}\label{w_eq1}
\begin{cases}
w\le u\le w+\epsilon T, \quad\text{on } Q_{T},\\
w_t-\calL^{{\mathsf{qnl}}} w(x)\le 0-\epsilon <0, \quad \forall x\in \Omega.
\end{cases}
\end{equation}
We claim that the maximum of $w$ on $Q_{T-\epsilon}$ occurs on $\partial_{p} Q_{T-\epsilon}:=\overline{\left(\Omega\cup\Omega_{\mathcal{I}}\right) } \times\{0\}  \cup  \Omega_{\mathcal{I}}\times (0, T-\epsilon]$.

Suppose the contrary, that is, $w(x, t)$ has its maximum at $(x^*,t^*)\in \overline{Q_{T-\epsilon}}$ with $0<t^*\le T-\epsilon$ and
$x^*\in \Omega=(-1,1)$.
Because $0<t^*\le T-\epsilon$, thus, $w_t(x^*,t^*)$ must be equal to $0$ if $0<t^* < T-\epsilon$,  and
$w_t(x^*,t^*)\ge 0$ if $t^* = T-\epsilon$. Meanwhile, because $w(x^*,t^*)$ is a maximum value and both diffusion kernels $\gamma_{\delta_1}$
 and $\gamma_{\delta_2}$ are non-negative, so from the definition of $\calL^{\mathsf{qnl}}$ \eqref{def_Lqnl}
we have
\[
\calL^{{\mathsf{qnl}}} w(x^*)\le 0.
\]
This immediately leads to $w_t(x^*,t^*)-\calL^{{\mathsf{qnl}}} w(x^*)\ge 0$, which contradicts the second expression of \eqref{w_eq1}. Therefore, the maximum of $w$ on $\overline{Q_{T-\epsilon}} $ occurs on $\partial_{p} Q_{T-\epsilon}$.

Now, we are going to prove that the maximum of $u$ occurs on $\partial_{p} Q_{T}$. Notice that
\begin{equation}\label{w_eq2}
\begin{cases}
&w\le u \text{ and }\partial_p Q_{T-\epsilon} \subset \partial_p Q_{T},\\
& \max\limits_{\overline{Q_{T-\epsilon}}} w=\max\limits_{\partial_p Q_{T-\epsilon}} w\le \max\limits_{ \partial_p Q_{T-\epsilon} } u\le \max\limits_{ \partial_p Q_{T} } u,
\end{cases}
\end{equation}
and $u\le w+\epsilon T$, so with \eqref{w_eq2}, we also have
\begin{equation}\label{w_eq3}
\max\limits_{\over{Q_{T-\epsilon}}} u\le \max\limits_{\over{Q_{T-\epsilon}}} w+\epsilon T\le \max\limits_{ \partial_p Q_{T} } u+\epsilon T.
\end{equation}
Because $u\in \overline{C(\Omega\cup\Omega_{\mathcal{I}})}$, we can obtain that
$
\max\limits_{\over{Q_{T-\epsilon}}} u \uparrow \max\limits_{\over{Q_{T}}} u  \text{ as } \epsilon\downarrow 0.
$
By allowing $\epsilon\downarrow 0$ and combining \eqref{w_eq3} together, we get
\[
\max\limits_{\overline{Q_T}} u=\lim_{\epsilon \downarrow 0} \max\limits_{\over{Q_{T-\epsilon}}} u\le \lim_{\epsilon \downarrow 0} \left(\max\limits_{ \partial_p Q_{T} } u+\epsilon T\right)
=\max\limits_{ \partial_p Q_{T} } u \le \max_{\overline{Q_T}} u.
\]
Therefore, we can conclude that $ \max\limits_{\overline{Q_T}} u =\max\limits_{ \partial_p Q_{T} } u$, which corresponds to the maximum principle when $f(x)\le 0$.
\end{proof}}

\section{Finite difference discretization}\label{section_fdm}

In this section, we discuss the discretization of the quasinonlocal
diffusion, for which the coupling is done at the continuous level. As
our main focus is to develop a consistent nonlocal coupling model
within a continuous framework, for the purpose of simplicity, we will {\helen{follow the idea in references \cite{Tian2013a,Tian2014a}}}
and just use a first order numerical scheme by a simple Riemann sum
approximation of the integral. Development of other type of
numerical discretization and higher order finite difference scheme will be left for
future works.

For concreteness, let us take the interval $\Omega=(-1, 1)$, which is
decomposed into $\Omega_1=(-1,0)$ and $\Omega_2=(0, 1)$ with interface
at $x=0$.  We divide the interval into $2N$ uniform subintervals
with equal length: $h=1/N$ and grid points $-1 = x_0 <x_1<\dots<x_{2N}=1$, so the interface grid point is $x_N = 0$.
The volume constrained region is
$\Omega_{\mathcal{I}}:={[-\delta_1-1,-1]\cup [1, 1+\delta_1]}$, where Dirichlet boundary condition $u = 0$ is assumed.

We assume that $\delta_1=r_1 \,h$, $\delta_2=r_2\, h$ with
$M:=r_1/r_2\in \mathbb{N}$. As we take a uniform mesh throughout the
domain, the stencil width is different in the two regions. We use the
scaling invariance of second moments of $\gamma_{\delta_1}$ and
$\gamma_{\delta_2}$ and approximate the quasinonlocal diffusion
operator $\calL^{\mathsf{qnl}}$ in the three regimes.

Case 1. For grid point $x_i\in [x_0, \; x_{N}=0]$, it corresponds to $\Omega_1$, thus,
\begin{equation*}
\begin{split}
\calL^{\mathsf{qnl}} u(x_i)
=&\int_{-\delta_1}^{\delta_1}
\left( u(x_i+s)-u(x_i)\right) \gamma_{\delta_1}(s)ds\\
=&\int_{0}^{\delta_1}
\left(\frac{ u(x_i+s)-2u(x_i)+u(x_i-s)}{s^2}\right) s^2\gamma_{\delta_1}(s)ds\\
=&\sum_{j=1}^{r_1}\left(\frac{ u(x_{i+j})-2u(x_i)+u(x_{i-j}) }{(jh)^2}\right)
\int_{(j-1)h}^{jh}s^2\gamma_{\delta_1}(s)ds + O(h).
\end{split}
\end{equation*}

Case 2.
For grid point $x_i \in [x_{N+r_1}=x_{N}+\delta_1,\;  x_{2N+1}]$, it corresponds to $\Omega_2$, thus,
\begin{equation*}
\begin{split}
\calL^{\mathsf{qnl}} u(x_i)
=&\int_{-\delta_2}^{\delta_2}
\left( u(x_i+s)-u(x_i)\right) \gamma_{\delta_2}(s)ds\\
=&\int_{0}^{\delta_2}
\left(\frac{ u(x_i+s)-2u(x_i)+u(x_i-s)}{s^2}\right) s^2\gamma_{\delta_2}(s)ds\\
=&\sum_{j=1}^{r_2}\left(\frac{ u(x_{i+j})-2u(x_i)+u(x_{i-j}) }{(jh)^2}\right)
\int_{(j-1)h}^{jh}s^2\gamma_{\delta_2}(s)ds + O(h).
\end{split}
\end{equation*}

Case 3.  For grid points
$ x_i \in [x_{N+1},\; x_{N+r_1-1}]\subset(0, \delta_1)$, this is the buffer
region, we have
\begin{equation*}
\begin{split}
\calL^{\mathsf{qnl}}& u(x_i)\\
=& \int_{-\delta_1}^{-x_i}\left(u(x_i+s)-u(x_i)\right) \gamma_{\delta_1}(s)ds
+\frac{1}{M}\int_{-\frac{1}{M}x_{i}}^{\delta_2}\left(u(x_i+s)-u(x_i)\right) \gamma_{\delta_2}(s)ds\\
&\quad + \frac{1}{M}\sum_{k=1}^{M-1} \int_{-\frac{1}{k}x_{i}}^{\frac{1}{k}x_{i}} \left(u(x_i+s)-u(x_i)\right) \gamma_{\delta_2}(s)ds\\
=& \int_{-\delta_1}^{-x_i}\frac{\left(u(x_i+s)-u(x_i)\right)}{s^2} s^2 \gamma_{\delta_1}(s)ds
+\frac{1}{M}\int_{-\frac{1}{M}x_{i}}^{\delta_2} \frac{\left(u(x_i+s)-u(x_i)\right)}{s^2} s^2 \gamma_{\delta_2}(s)ds\\
&\quad +\frac{1}{M}\sum_{k=1}^{M-1} \int_{0}^{\frac{1}{k}x_{i}}\frac{ \left(u(x_i+s)-2u(x_i)+u(x_i-s)\right)}{s^2}s^2 \gamma_{\delta_2}(s)ds\\
=:& T_1+T_2+T_3.
\end{split}
\end{equation*}
For the first term $T_1$, we approximate it by
\begin{equation*}
\begin{split}
T_1=& \int_{-\delta_1}^{-x_i}\frac{\left(u(x_i+s)-u(x_i)\right)}{s^2} s^2 \gamma_{\delta_1}(s)ds
=\int_{x_i}^{\delta_1}\frac{\left(u(x_i-s)-u(x_i)\right)}{s^2} s^2 \gamma_{\delta_1}(s)ds\\
=& \sum_{j=i-m+1}^{r_1}\left(\frac{ u(x_{i-j})-u(x_i)) }{(jh)^2}\right)
\int_{(j-1)h}^{jh}s^2\gamma_{\delta_1}(s)ds + O(h).
\end{split}
\end{equation*}
For the second term $T_2$, we have
\begin{equation}
\begin{split}
T_2&=\frac{1}{M}\int_{-\frac{1}{M}x_{i}}^{\delta_2} \frac{\left(u(x_i+s)-u(x_i)\right)}{s^2} s^2 \gamma_{\delta_2}(s)ds\\
&=\frac{1}{M}\int_{-\frac{1}{M}x_{i}}^{0} \frac{\left(u(x_i+s)-u(x_i)\right)}{s^2} s^2 \gamma_{\delta_2}(s)ds\\
&\qquad \qquad \quad +\frac{1}{M}\int_{0}^{\delta_2} \frac{\left(u(x_i+s)-u(x_i)\right)}{s^2} s^2 \gamma_{\delta_2}(s)ds\\
&=: T_{21}+T_{22}.
\end{split}
\end{equation}
For $T_{21}$:
\begin{equation*}
\begin{split}
T_{21}=&\frac{1}{M}\int_{-\frac{1}{M}x_{i}}^{0} \frac{\left(u(x_i+s)-u(x_i)\right)}{s^2} s^2 \gamma_{\delta_2}(s)ds\\
=&\frac{1}{M^2}\int_{-x_{i}}^{0} \frac{\left(u(x_i+\frac{s}{M})-u(x_i)\right)}{s^2} s^2 \gamma_{\delta_2}(\frac{s}{M})ds.
\end{split}
\end{equation*}
Note that $x_i - \frac{x_i}{M}$ corresponding to the left end of the above integration domain is not a grid point; we will use an interpolation for the value of $u$ at $x_i + \frac{s}{M}$ which leads to
\begin{equation*}
\left(u(x+\frac{s}{M})-u(x)\right)
\approx \frac{1}{M}\left(u(x+s)-u(x)\right),
\end{equation*}
hence, we approximate $T_{21}$ by
\begin{equation*}
\begin{split}
T_{21}
=&\frac{1}{M^2}\int_{-x_{i}}^{0} \frac{\left(u(x_i+\frac{s}{M})-u(x_i)\right)}{s^2} s^2 \gamma_{\delta_2}(\frac{s}{M})ds \\
=& \frac{1}{M^2}\int_{-x_{i}}^{0}\frac{1}{M} \frac{\left(u(x_i+s)-u(x_i)\right)}{s^2} s^2 \gamma_{\delta_2}(\frac{s}{M})ds + O(h)\\
=&\frac{1}{M^3}\sum_{j=1}^{i-m} \frac{\left(u(x_{i-j})-u(x_i)\right)}{(jh)^2}\int_{(j-1)h}^{jh} s^2 \gamma_{\delta_2}(\frac{s}{M})ds + O(h) \\
=&\sum_{j=1}^{i-m} \frac{\left(u(x_{i-j})-u(x_i)\right)}{(jh)^2}\int_{(j-1)h}^{jh} s^2 \gamma_{\delta_1}(s)ds + O(h).
\end{split}
\end{equation*}
For $T_{22}$,  it is
\begin{equation*}
\begin{split}
  T_{22}=&\frac{1}{M}\int_{0}^{\delta_2}
  \frac{\left(u(x_i+s)-u(x_i)\right)}{s^2} s^2 \gamma_{\delta_2}(s)ds \\
  = & \frac{1}{M^2}\int_{0}^{\delta_1} \frac{\left(u(x_i+\frac{s}{M})-u(x_i)\right)}{s^2} s^2 \gamma_{\delta_2}(\frac{s}{M})ds\\
  = & \frac{1}{M^3}\int_{0}^{\delta_1}
  \frac{\left(u(x_i+s)-u(x_i)\right)}{s^2} s^2
  \gamma_{\delta_2}(\frac{s}{M})ds + O(h) \\
  = & \sum_{j=1}^{r_1}
  \frac{\left(u(x_{i+j})-u(x_i)\right)}{(jh)^2}\int_{(j-1)h}^{(j)h}
  s^2 \gamma_{\delta_1}(s)ds + O(h).
\end{split}
\end{equation*}
For $T_{3}$, considering each $1\le k\le M-1$, we have two cases $\frac{1}{k}x_i<\delta_2$ and $\frac{1}{k}x_i\ge\delta_2$:
\begin{itemize}
\item If $\frac{1}{k}x_i<\delta_2$, we then handle $T_{3k}$ in a similar {way} to $T_{21}$ and get
\begin{equation*}
\begin{split}
T_{3k}:=& \frac{1}{M}\int_{0}^{\frac{1}{k}x_{i}}\frac{ \left(u(x_i+s)-2u(x_i)+u(x_i-s)\right)}{s^2}s^2 \gamma_{\delta_2}(s)ds\\
= &  \frac{1}{Mk^2}\int_{0}^{x_{i}}\frac{ \left(u(x_i+s)-2u(x_i)+u(x_i-s)\right)}{s^2}s^2 \gamma_{\delta_2}(\frac{s}{k})ds + O(h) \\
=&\frac{1}{Mk^2}\sum_{j=1}^{i-m}\frac{\left( u(x_{i+j})-2u(x_i)+u(x_{i-j}) \right)}{(jh)^2}\int_{(j-1)h}^{jh}s^2 \gamma_{\delta_2}(\frac{s}{k})ds + O(h).
\end{split}
\end{equation*}
\item If $\frac{1}{k}x_i\ge\delta_2$, then $T_{3k}$ is computed by
\begin{equation*}
\begin{split}
T_{3k}:=& \frac{1}{M}\int_{0}^{\frac{1}{k}x_{i}}\frac{ \left(u(x_i+s)-2u(x_i)+u(x_i-s)\right)}{s^2}s^2 \gamma_{\delta_2}(s)ds\\
=&\frac{1}{M}\int_{0}^{\delta_2}\frac{ \left(u(x_i+s)-2u(x_i)+u(x_i-s)\right)}{s^2}s^2 \gamma_{\delta_2}(s)ds\\
=&\frac{1}{M^2}\int_{0}^{\delta_2}\frac{ \left(u(x_i+\frac{s}{M})-2u(x_i)+u(x_i-\frac{s}{M})\right)}{s^2}s^2 \gamma_{\delta_2}(\frac{s}{M})ds\\
= & \int_{0}^{\delta_1}\frac{ \left(u(x_i+s)-2u(x_i)+u(x_i-s)\right)}{s^2}s^2 \gamma_{\delta_1}(s)ds + O(h)\\
=&\sum_{j=1}^{r_1}\frac{ \left(u(x_{i+j})-2u(x_i)+u(x_{i-j})\right)}{(jh)^2} \int_{(j-1)h}^{(j)h}s^2 \gamma_{\delta_1}(s)ds + O(h).
\end{split}
\end{equation*}
\end{itemize}

It is straightforward to check that the resulting finite difference
approximation to the diffusion operator preserves the symmetry and is
also positive definite.

\section{Numerical results}\label{section_num}
In this section, we will consider serval benchmark problems to check the accuracy and stability performance of the numerical scheme.
The expression of $\gamma_{\delta}(s)$ is fixed to be
\[
\gamma_{\delta}(s)=\frac{2}{\delta^2 s}.
\]
The time discretization is just the simple Euler method with
$\Delta t=\kappa_{\textsf{cfl}} h^2$, $\kappa_{\textsf{cfl}}$ is set
to be $1/4$. The patch-test consistency, symmetry and positive
definiteness of the finite difference matrix are validated numerically.


We first consider the following one-dimensional volume-constrained
Dirichlet \; problem
\[
u(x,0)=x^2\, (1-x^2),
\quad
f(x)=e^{-t}(12 x^2-2)-e^{-t}x^2(1-x^2).
\]
The corresponding limiting local diffusion problem as $\delta \rightarrow 0$ is
\begin{equation}\label{Local_diff_examp1}
\begin{cases}
\frac{\partial u}{\partial t}- u_{xx}=f(x), & -1<x<1, \; \forall\, t>0, \\
u(x,0)=x^2\, (1-x^2), &  -1<x<1,\\
u(-1,t)=u(1,t)\equiv 0, & \forall\, t>0.
\end{cases}
\end{equation}
The exact solution for the limiting local diffusion problem is
\[
u_{\text{exact,local}}=e^{-t}\,x^2\,(1-x^2).
\]
We consider three cases: Case A: $\delta_1=6h$ and $\delta_2=2h$, with
$M=3$; Case B: $\delta_1=3h$ and $\delta_2=h$, and Case C:
$\delta_1=4h$ and $\delta_2=2h$, with $M=2$.  Note that in Case B, the
numerical scheme is effectively a coupling of {local kernel} with a
three-point stencil, and thus can be viewed as a nonlocal-to-local
coupling (on the level of numerical discretization).  The final
simulation time is $T=1$. Note that as $h \to 0$, not only that we
refine the mesh, but also the nonlocal diffusion model converges to
the local one. This numerical test thus verifies both convergence
(i.e., both the discretization error and modeling discrepancy go to
$0$).  We compute the $L^{\infty}$ difference between the quasinonlocal
solutions and the limiting local solution. The results are listed in
Table~\ref{table_limit_diff}. We observe the first order convergence
rate due to the numerical discretization of the quasinonlocal
diffusion in all three cases.

\begin{table}[h]
\begin{center}
\begin{tabular}{|c|c|c|c|c|c|c|}
\hline
{$h$} & {Case A} & Order &{ Case B} & Order &{Case C} & Order\\
\hline
$1/50$ & $ 6.132e$-$2$ & -  & $2.506e$-$2$   & -& $3.720e$-$2 $& - \\
\hline
$1/100$ & $ 3.018e$-$2$ & $1.02$ & $1.259e$-$2$& $0.99$  & $1.856e$-$2$ & $1.00$\\
\hline
$1/200$ & $ 1.506e$-$2$ & $1.00$ & $6.340e$-$3$& $0.99$ & $9.320e$-$3$& $0.99$\\
\hline
$1/400$ &$7.556e$-$3$ & $1.00$ & $3.192e$-$3 $ & $0.99 $& $4.687e$-$3$ & $1.00$\\
\hline
\end{tabular}
\smallskip
\caption{$L^{\infty}$ difference (diff) of \eqref{Local_diff_examp1} Case A :
  $\delta_1=6h$ and $\delta_2=2h$; Case B: $\delta_1=3h$ and
  $\delta_2=h$; Case C: $\delta_1=4h$ and $\delta_2=2h$.  The final
  simulation time is $T=1$.}\label{table_limit_diff}
\end{center}
\end{table}

{\helen{ We also computed the errors measured in the energy norm, which is defined as
\begin{equation}\label{def_error_energy}
\text{Energy err}:= \max\limits_{0\le t\le T} \|\nabla u(x, t)-\nabla u_{\text{exact,local}}(x, t)\|_{L^2(\Omega\cup\Omega_{\mathcal{I}})}.
\end{equation}
\helen{The discrete gradients are approximated by second order central
  finite difference.}  The results are listed in
Table~\ref{table_energy_diff}. We observe that the convergence order
is just around $0.5$ rather than $1$. 
This is due to the artificial boundary layer of
$\nabla u$ because the local limiting solution
$u_{\text{exact,local}}$ is not equal to zero on
$\Omega_{\mathcal{I}}=[-1-\delta_1,-1]\cup [1, 1+\delta_1]$ (see
Figure~\ref{Figure:du_boundary} for demonstration).
\begin{table}[h]
\begin{center}
\begin{tabular}{|c|c|c|c|c|}
\hline
{$h$} & {$\text{Energy err}$ of Case A} &{ Order} & {$\text{Energy err}$ of Case B} & { Order}\\
\hline
$1/50$ & $2.820e$-$1$ & $-$ & $2.679e$-$1$ & $-$ \\
\hline
$1/100$ & $2.065e$-$1$ & $0.45$ & $1.983e$-$1$ & $0.43$\\
\hline
$1/200$ & $ 1.486e$-$1$ & $0.47$ & $1.434e$-$1$ & $0.47$\\
\hline
$1/400$ &$1.060e$-$1$& $ 0.49$ & $1.025e$-$1$ & $0.49$\\
\hline
\end{tabular}
\smallskip
\caption{ Errors of quasinonlocal solution and local limiting solution \eqref{Local_diff_examp1} measured in the energy norm \eqref{def_error_energy}. Case A :
  $\delta_1=6h$ and $\delta_2=2h$; Case B: $\delta_1=4h$ and $\delta_2=2h$.  The final
  simulation time is $T=1$.}\label{table_energy_diff}
\end{center}
\end{table}
\begin{figure}[htp!]
\centering
\subfigure[$h=1/50$]{
\includegraphics[height =4 cm, width=0.3\textwidth]{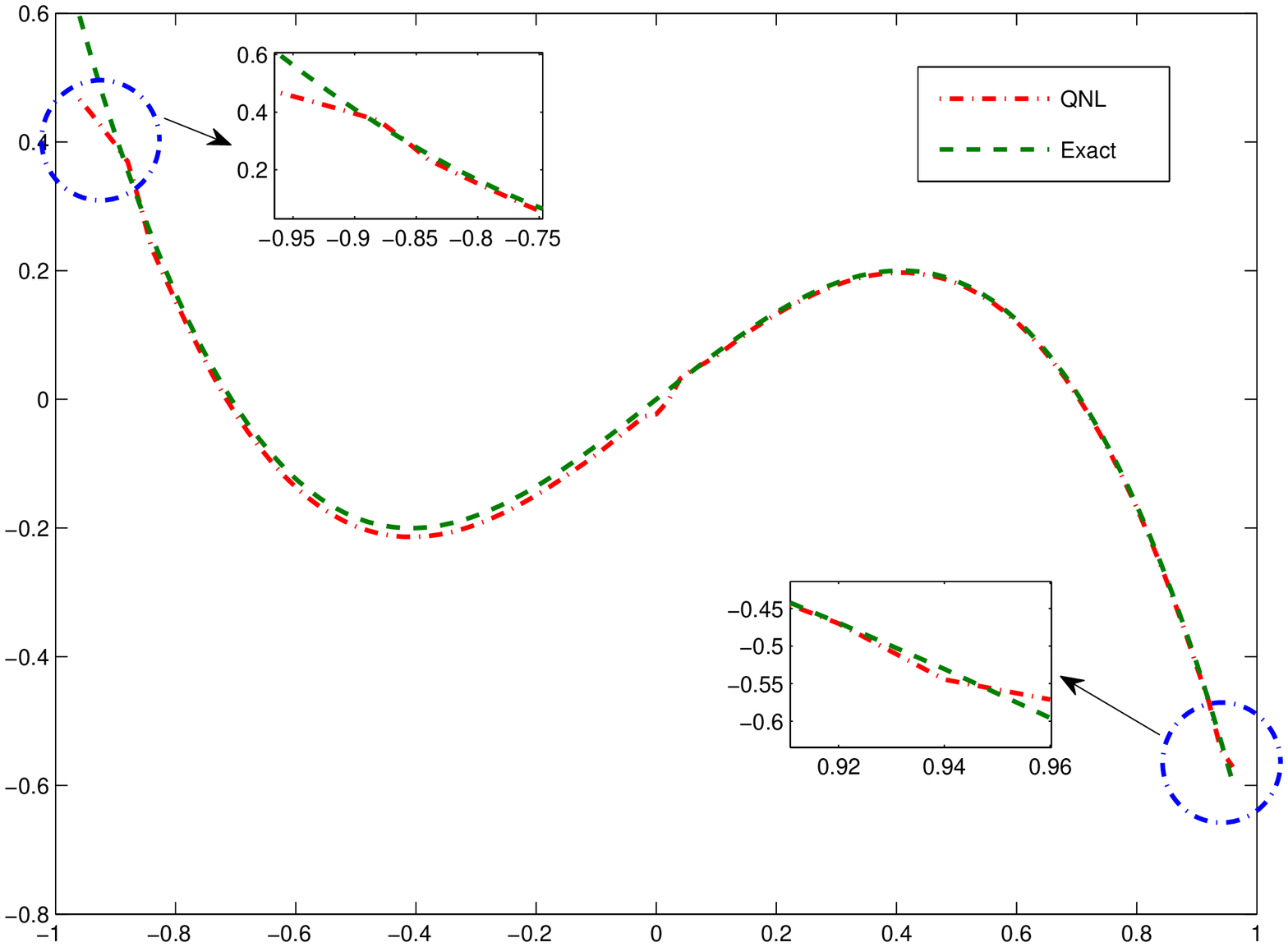}}
\subfigure[$h=1/100$]{
\includegraphics[height =4 cm, width=0.3\textwidth]{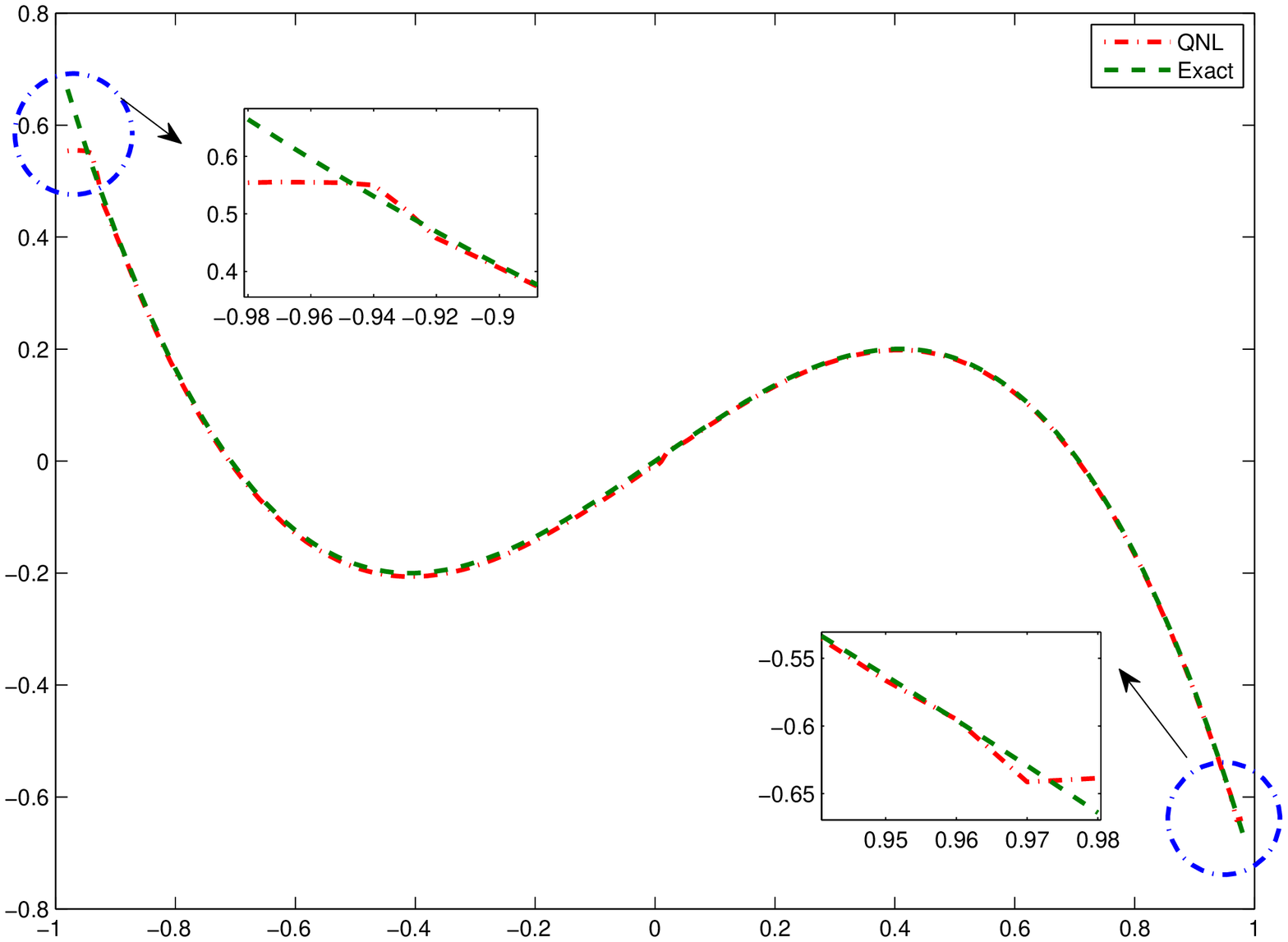}}
\subfigure[$h=1/200$]{
\includegraphics[height =4 cm, width=0.3 \textwidth]{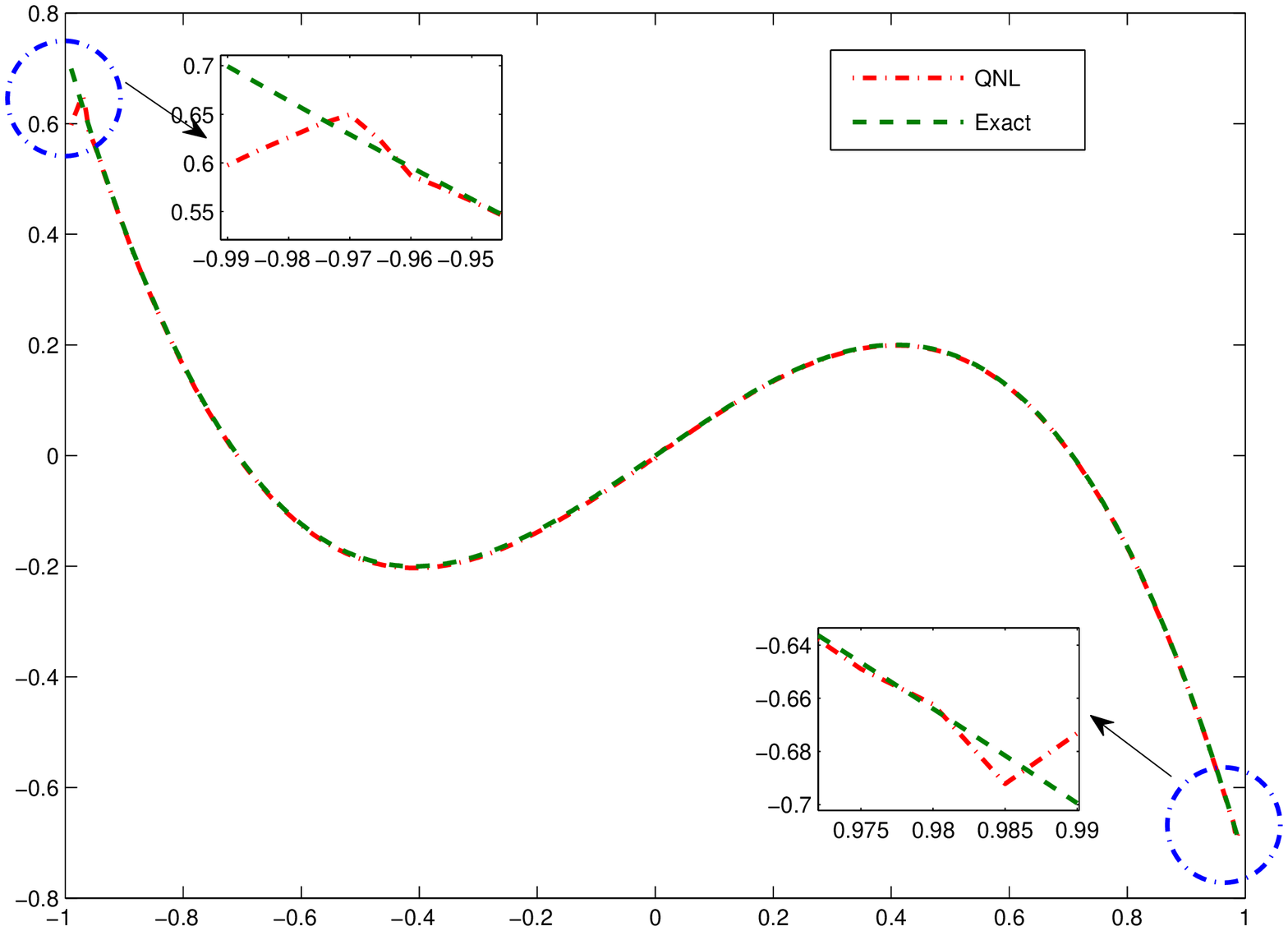}}
\vspace{- 10 pt}
\caption{Plots of displacement gradients (strains) for the
  quasinonlocal diffusion with $\delta_1=6h$ and $\delta_2=2h$
  versus the local limiting diffusion at $T=1$ with
  various $h$. The sizes of boundary layers are of $O(2\delta_1)$ and $O(2\delta_2)$ on both sides, respectively. }\label{Figure:du_boundary}
\end{figure}
To further study the origin of the loss of convergence order, we
compute the errors of quasinonlocal solution and local limiting
solution \eqref{Local_diff_examp1} measured in the energy norm within the
interior of $\Omega$, that is, the errors are only measured within
$[-1/2,\; 1/2]$ that contains the interface $x_0=0$:
\begin{equation}\label{def_error_energy2}
\text{Int energy err}:= \max\limits_{ 0\le t\le T} \|\nabla u(x, t)-\nabla u_{\text{exact,local}}(x, t)\|_{L^2([-1/2,\;1/2])}.
\end{equation}
This time, we clearly observe the first order convergence rate in
Table~\ref{table_interior_energy_diff}, further confirming that the
loss of convergence is from boundary layer. In fact, this is a known
problem for numerically imposing volumetric boundary condition (see
e.g., \cite{Tian2013a}), which we will not go into further details here.
\begin{table}[h]
\begin{center}
\begin{tabular}{|c|c|c|c|c|}
\hline
{$h$} & {Int $\text{energy err}$ of Case A} &{ Order} & {Int $\text{energy err}$ of Case B} & { Order}\\
\hline
$1/50$ & $2.920e$-$2$ & $-$ & $1.779$-$2$ & $-$ \\
\hline
$1/100$ & $1.383e$-$2$ & $1.07$ & $8.629e$-$3$ & $1.04$\\
\hline
$1/200$ & $6.716e$-$3$ & $1.04$ & $4.247e$-$3$ & $1.02$\\
\hline
$1/400$ & $3.063e$-$3$& $ 1.13$ & $2.106e$-$3$ & $1.01$\\
\hline
\end{tabular}
\smallskip
\caption{ Interior errors of quasinonlocal solution and local limiting solution \eqref{Local_diff_examp1} measured in energy norm \eqref{def_error_energy2}. Case A: $\delta_1=6h$ and $\delta_2=2h$; Case B: $\delta_1=4h$ and $\delta_2=2h$.  The final
  simulation time is $T=1$.}\label{table_interior_energy_diff}
\end{center}
\end{table}
}
}

Next we fix $h=1/200$, $\delta_1=5h$, $\delta_2=h$, and consider
initial datum which has a singularity at $x^*=-0.45+h/2$.
\[
u(x,0)=\frac{\sin(\pi x)}{x-x^*},
\quad
f(x)=0.
\]
The solution $u(x,t)$ is plotted for $T=1/4$ in Figure~\ref{Figure:
  defect_external_force}. We can see that the quasinonlocal diffusion
matches the fully nonlocal model, whereas the result of the fully
local diffusion is distinguishable from that of fully nonlocal model.
\begin{figure}[htp!]
\centering
\includegraphics[height =5.5 cm]{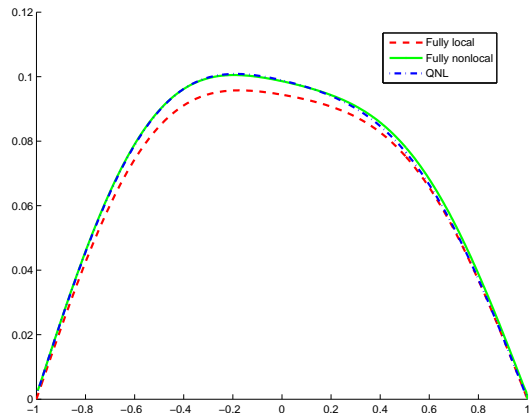}
\caption{The solution $u(x,t)$ is plotted for $T=1/4$.}\label{Figure: defect_external_force}
\end{figure}

\section{Conclusion}\label{section_con}
We have {\helen{proposed a new {\helen{self-adjoint}}, consistent and stable coupling
strategy for nonlocal diffusion problems in one dimensional space, which couples two nonlocal operators associated with different
horizon parameters $\delta_1$ and $\delta_2$ with $M:=\frac{\delta_1}{\delta_2}$ being an integer}}. This new coupling model is proved to be
{self-adjoint} and patch-test consistent.  In addition, the quasinonlocal
diffusion is also stable (coercive) with respect to the energy norm
induced by the nonlocal diffusion kernels as well as the $L^2$ norm,
and it satisfies the maximum principle.

We also consider a first order finite difference approximation to
discretize the continuous coupling model. This numerical approximation
preserves the {self-adjointness}, consistency, coercivity and the maximum
principle. The numerical scheme is validated through several examples.

Also, as for future works, {\helen{another}} immediate direction is extending the
coupling scheme to higher dimensions; as already mentioned, since the
nonlocal diffusion model only involves pairwise interactions in the
form of $u(y) - u(x)$, the extension should not pose serious
difficulties. Better numerical approximation to the continuous
quasinonlocal diffusion operator is also another interesting direction
to pursue. Another interesting topic is to couple the nonlocal diffusion operator directly with local diffusion (Laplace) operator in the framework of the quasinonlocal coupling (see \cite{Bobaru2008,Han2012,Delia2015a,Seleson2015a,Tian2016a} for some recent works that {couple the local and nonlocal diffusions together}).

\appendix
\section{Derivation of $\mathcal{L}^{\mathsf{qnl}}$}\label{section_app}
We give the deviation of the coupled diffusion operator
$\calL^{{\mathsf{qnl}}}$ \eqref{def_Lqnl}, stated as the following
Proposition. The calculation is straightforward but somewhat tedious.

\begin{proposition}\label{thm_qnl_diffusion}
  The coupled quasinonlocal energy functional $E^{{\mathsf{tot}},{\mathsf{qnl}}}$ induces the quasinonlocal diffusion
  operator $\calL^{\mathsf{qnl}}$ defined in \eqref{def_Lqnl}.
\end{proposition}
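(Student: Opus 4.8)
The plan is to compute the first variation of $E^{\mathsf{tot},\mathsf{qnl}}$ and match it against \eqref{def_Lqnl}. Since the functional is quadratic, differentiating $\epsilon\mapsto E^{\mathsf{tot},\mathsf{qnl}}(u+\epsilon v)$ at $\epsilon=0$ yields $\tfrac12\,\qnlbilinear(u,v)$ with $\qnlbilinear$ the bilinear form \eqref{QNL_bilinear}, so by the defining relation $\calL^{\mathsf{qnl}}=-\,\delta E^{\mathsf{tot},\mathsf{qnl}}/\delta u$ it suffices to show
\[
\tfrac12\,\qnlbilinear(u,v)=-\int_{\Omega}\bigl(\calL^{\mathsf{qnl}}u\bigr)(x)\,v(x)\,dx
\]
for every admissible $v$ (vanishing on $\Omega_{\mathcal{I}}$). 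I would handle the two integrals in \eqref{QNL_bilinear} separately: the $\gamma_{\delta_1}$ term over $\{x\le 0 \text{ or } y\le 0\}$, and the geometric-reconstruction term over $\{x>0,\,y>0\}$.

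For the first integral the computation is routine: its domain is symmetric under $x\leftrightarrow y$ while $\gamma_{\delta_1}(\abs{y-x})(u(y)-u(x))$ is antisymmetric, so the corresponding part of $\tfrac12\,\qnlbilinear(u,v)$ equals $-\int v(x)\bigl(\int_{\{y:\,x\le0\text{ or }y\le0\}}\gamma_{\delta_1}(\abs{y-x})(u(y)-u(x))\,dy\bigr)\,dx$. For $x\le0$ the inner integral runs over all of $\RR$ and equals $\calL_{\delta_1}u(x)$, giving Case I; for $x>0$ compact support restricts it to $y\in(x-\delta_1,0)$, which is empty when $x\ge\delta_1$ and otherwise gives precisely the first term of Case II.

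The substance lies in the reconstruction integral. Write $p_k:=x+\tfrac{k}{M}(y-x)$ and $\Delta_j w:=w(p_{j+1})-w(p_j)$, so $p_0=x$, $p_M=y$. The key step is a summation-by-parts regrouping collecting, for each point, the coefficient of $v$ evaluated there:
\[
\sum_{j=0}^{M-1}(\Delta_j u)(\Delta_j v)=(\Delta_{M-1}u)\,v(y)-(\Delta_0 u)\,v(x)+\sum_{k=1}^{M-1}\bigl(\Delta_{k-1}u-\Delta_k u\bigr)\,v(p_k),
\]
with $\Delta_{k-1}u-\Delta_k u=-\bigl[(u(p_k+\tfrac1M(y-x))-u(p_k))+(u(p_k-\tfrac1M(y-x))-u(p_k))\bigr]$. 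For the two boundary terms I would use the $x\leftrightarrow y$ symmetry of $\{x>0,\,y>0\}$ to turn the $v(y)$-term into a second $v(x)$-term, then substitute $\eta=p_1=x+\tfrac1M(y-x)$ for $y$: the Jacobian is $M$ and $\abs{y-x}=M\abs{\eta-x}$, so the scaling relation gives $\gamma_{\delta_1}(M\abs{\eta-x})=M^{-3}\gamma_{\delta_2}(\abs{\eta-x})$, while $\{y>0\}$ becomes $\{\eta>\tfrac{M-1}{M}x\}=\{\eta>x-\tfrac1Mx\}$. The two terms then combine into the contribution $\tfrac1M\int_{\eta>x-\frac1Mx}\gamma_{\delta_2}(\abs{\eta-x})(u(\eta)-u(x))\,d\eta$ to $\calL^{\mathsf{qnl}}u(x)$ for $x>0$ — the last term of Case II.

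For the remaining sum, fix $k\in\{1,\dots,M-1\}$ and change variables $(x,y)\mapsto(\zeta,r)$ with $\zeta=p_k$ and $r=\tfrac1M(y-x)$, so $p_{k\pm1}=\zeta\pm r$, $x=\zeta-kr$, $y=\zeta+(M-k)r$; the Jacobian is $M$, again $\gamma_{\delta_1}(M\abs{r})=M^{-3}\gamma_{\delta_2}(\abs{r})$, and $\{x>0,\,y>0\}$ becomes $\{\zeta>0,\;-\tfrac{\zeta}{M-k}<r<\tfrac{\zeta}{k}\}$. The $k$-th term then equals $-\tfrac1{2M}\int_{\zeta>0}v(\zeta)\bigl(\int_{-\zeta/(M-k)}^{\zeta/k}\gamma_{\delta_2}(\abs{r})\,\Phi_\zeta(r)\,dr\bigr)\,d\zeta$, where $\Phi_\zeta(r):=(u(\zeta+r)-u(\zeta))+(u(\zeta-r)-u(\zeta))$ is even in $r$. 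Using $\int_{-a}^{b}g=\int_0^{a}g+\int_0^{b}g$ for even $g$ and summing over $k$ — the involution $k\mapsto M-k$ permuting $\{1,\dots,M-1\}$ — the asymmetric limits symmetrize and the $k$-sum becomes $2\sum_{j=1}^{M-1}\int_{-\zeta/j}^{\zeta/j}\gamma_{\delta_2}(\abs{r})(u(\zeta+r)-u(\zeta))\,dr$, so this block contributes $\tfrac1M\sum_{j=1}^{M-1}\int_{(x-\frac1jx)<y<(x+\frac1jx)}\gamma_{\delta_2}(\abs{y-x})(u(y)-u(x))\,dy$ to $\calL^{\mathsf{qnl}}u(x)$ for $x>0$ — the middle term of Case II. It remains to assemble: for $x\le0$ the reconstruction integral contributes nothing, since every $p_k$ is a convex combination of two positive numbers so its variation is supported on $\{x>0\}$, leaving $\calL_{\delta_1}u$; for $0<x<\delta_1$ the three pieces above give Case II exactly; and for $x\ge\delta_1$ the $\gamma_{\delta_1}$ piece vanishes by compact support while each of the intervals $(x-\tfrac1jx,x+\tfrac1jx)$, $1\le j\le M-1$, and $(x-\tfrac1Mx,\infty)$ then contains $[x-\delta_2,x+\delta_2]$, so all $M$ weighted reconstruction terms equal $\calL_{\delta_2}u(x)$ and sum to it — Case III. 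I expect the only delicate part to be the domain bookkeeping: tracking how $\{x>0,\,y>0\}$ transforms under the substitutions near the interface $x=0$ (this is exactly what produces the buffer region $\{0<x<\delta_1\}$ with its asymmetric integration limits), together with noticing that the evenness of $\Phi_\zeta$ combined with the reindexing $k\mapsto M-k$ is precisely what restores the symmetric limits appearing in \eqref{def_Lqnl}.
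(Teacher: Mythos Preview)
Your proof is correct and reaches the same conclusion as the paper's, but the organization differs in a useful way. The paper expands $(\Delta_j u)(\Delta_j v)=(\Delta_j u)\,v(p_{j+1})-(\Delta_j u)\,v(p_j)$, shows via the reindexing $j\mapsto M-1-j$ together with the $x\leftrightarrow y$ symmetry of $\{x>0,\,y>0\}$ that the two resulting sums coincide, and then performs two sequential one–dimensional substitutions (first $z=p_{j+1}$ replacing $y$, then $w=p_j$ replacing $x$) to express $T_2$ as a sum over $j=1,\dots,M$ of integrals over the asymmetric intervals $\bigl(z-\tfrac{z}{j},\,z+\tfrac{z}{M-j}\bigr)$; a final regrouping of these intervals yields the symmetric $j$-sum together with the $j=M$ term. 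You instead apply a discrete summation-by-parts identity at the outset, which separates the boundary contributions (carrying $v(x)$ and $v(y)$) from the interior ones (carrying $v(p_k)$, $1\le k\le M-1$) before any change of variables: the boundary pair then gives the $\tfrac1M\int_{y>x-x/M}$ term directly after one substitution, while for the interior you use a single simultaneous change $(x,y)\mapsto(\zeta,r)$ and then the evenness of $\Phi_\zeta$ together with $k\mapsto M-k$ to symmetrize. The two arguments are equivalent in content---the same kernel scaling identity $\gamma_{\delta_1}(M\lvert r\rvert)=M^{-3}\gamma_{\delta_2}(\lvert r\rvert)$ and the same combinatorial reindexing do the real work---but your decomposition makes it immediately transparent which pieces of the variation produce which of the three terms in Case~II of \eqref{def_Lqnl}.
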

\begin{proof}
The first variation of $E^{{\mathsf{tot}},{\mathsf{qnl} }}$ with test function $\forall v\in S_{\mathsf{qnl}}$ is
\begin{align*}
\la& \partial E^{{\mathsf{tot}},{\mathsf{qnl}}}(u), v\ra\nonumber\\
=& \frac{1}{2}\int_{x,y \in \mathbb{R}, x\le 0 \text{ or } y\le 0}
 \gamma_{\delta_1}(\abs{y-x})dxdy \left(u(y)-u(x)\right)\left(v(y)-v(x)\right)\nonumber\\
&+\frac{1}{2}\int_{x,y \in \mathbb{R}, x>0 \text{ and }  y>0} dxdy \Big[\gamma_{\delta_1}(\abs{y-x}) \nonumber\\
&\qquad\cdot \frac{1}{M}\sum_{j=0}^{M-1} \left(u(x+\frac{j+1}{M}(y-x))-u(x+\frac{j}{M}(y-x))\right)\nonumber\\
&\qquad\qquad \qquad \left(v(x+\frac{j+1}{M}(y-x))-v(x+\frac{j}{M}(y-x))\right)M^2\Big]
=:T_1+T_2.
\end{align*}
Because of the symmetry in the $T_1$ integral in $x$ and $y$, we can convert $T_1$ to
\begin{equation}\label{qnl_first_var_T1}
\begin{split}
T_1=&\frac{1}{2}\int_{x,y \in \mathbb{R}, x\le 0 \text{ or } y\le 0}
 \gamma_{\delta_1}(\abs{y-x})dxdy \left(u(y)-u(x)\right)\left(v(y)-v(x)\right)\\
=&\int_{x,y \in \mathbb{R}, x\le 0 \text{ or } y\le 0}
 \gamma_{\delta_1}(\abs{y-x})dxdy \left(u(x)-u(y)\right)\cdot v(x).
\end{split}
\end{equation}
We now focus on $T_2$.
\begin{align}\label{qnl_first_var_T2_eq1}
T_2=&\frac{1}{2}\sum_{j=0}^{M-1} \int_{x,y \in \mathbb{R}, x>0 \text{ and }  y>0} dxdy\Big[M \gamma_{\delta_1}(\abs{y-x}) \\
&\qquad\cdot \left(u(x+\frac{j+1}{M}(y-x))-u(x+\frac{j}{M}(y-x))\right)\nonumber\\
&\qquad\qquad\left(v(x+\frac{j+1}{M}(y-x))-v(x+\frac{j}{M}(y-x))\right)\Big]\nonumber\\
=& \frac{1}{2}\sum_{j=0}^{M-1}
\int_{x,y \in \mathbb{R}, x>0 \text{ and }  y>0} dxdy\Big[\frac{1}{M^2} \gamma_{\delta_2}\left(\frac{\abs{y-x}}{M}\right) \nonumber\\
&\qquad \cdot\left(u(x+\frac{j+1}{M}(y-x))-u(x+\frac{j}{M}(y-x))\right)\nonumber\\
&\qquad\qquad \quad \left(v(x+\frac{j+1}{M}(y-x))-v(x+\frac{j}{M}(y-x))\right)\Big]\nonumber\\
=&\frac{1}{2}\sum_{j=0}^{M-1}
\int_{x,y \in \mathbb{R}, x>0 \text{ and }  y>0} dxdy\Big[\frac{1}{M^2} \gamma_{\delta_2}\left(\frac{\abs{y-x}}{M}\right) \nonumber\\
&\qquad\qquad \cdot\left(u(x+\frac{j+1}{M}(y-x))-u(x+\frac{j}{M}(y-x))\right) v\left(x+\frac{j+1}{M}(y-x)\right)\Big]\nonumber\\
&-\frac{1}{2}\sum_{j=0}^{M-1}
\int_{x,y \in \mathbb{R}, x>0 \text{ and }  y>0} dxdy\Big[\frac{1}{M^2} \gamma_{\delta_2}\left(\frac{\abs{y-x}}{M}\right) \nonumber\\
&\qquad\qquad\quad \cdot\left(u(x+\frac{j+1}{M}(y-x))-u(x+\frac{j}{M}(y-x))\right) v\left(x+\frac{j}{M}(y-x)\right)\Big].\nonumber
\end{align}
Let $k:=(M-1)-j=M-(j+1)$ in the second summation term of \eqref{qnl_first_var_T2_eq1}, we get
\begin{align}\label{qnl_first_var_T2_eq2}
&T_2= \frac{1}{2}\sum_{j=0}^{M-1}
\int_{x,y \in \mathbb{R}, x>0 \text{ and }  y>0} dxdy\Big[\frac{1}{M^2} \gamma_{\delta_2}\left(\frac{\abs{y-x}}{M}\right) \\
&\qquad \cdot\left(u(x+\frac{j+1}{M}(y-x))-u(x+\frac{j}{M}(y-x))\right) v\left(x+\frac{j+1}{M}(y-x)\right)\Big]\nonumber\\
&-\frac{1}{2}\sum_{k=0}^{M-1}
\int_{x,y \in \mathbb{R}, x>0 \text{ and }  y>0} dxdy\Big[\frac{1}{M^2} \gamma_{\delta_2}\left(\frac{\abs{y-x}}{M}\right) \nonumber\\
&\cdot\left(u(\frac{k}{M}x+(1-\frac{k}{M})y)-u(\frac{k+1}{M}x+(1-\frac{k+1}{M})y)\right) v\left(\frac{k+1}{M}x+(1-\frac{k+1}{M})y\right)\Big]\nonumber\\
&= \frac{1}{2}\sum_{j=0}^{M-1}
\int_{x,y \in \mathbb{R}, x>0 \text{ and }  y>0} dxdy\Big[\frac{1}{M^2} \gamma_{\delta_2}\left(\frac{\abs{y-x}}{M}\right) \nonumber\\
&\qquad \cdot\left(u(x+\frac{j+1}{M}(y-x))-u(x+\frac{j}{M}(y-x))\right) v\left(x+\frac{j+1}{M}(y-x)\right)\Big]\nonumber\\
&+\frac{1}{2}\sum_{k=0}^{M-1}
\int_{x,y \in \mathbb{R}, x>0 \text{ and }  y>0} dxdy\Big[\frac{1}{M^2} \gamma_{\delta_2}\left(\frac{\abs{y-x}}{M}\right) \nonumber\\
&\qquad\quad\cdot\left(u(y+\frac{k+1}{M}(x-y))-u(y+\frac{k}{M}(x-y))\right)
v\left(y+\frac{k+1}{M}(x-y)\right)\Big].\nonumber
\end{align}
Changing the notation order of $x$ and $y$ in the second integral of \eqref{qnl_first_var_T2_eq2}, we have
\begin{align}\label{qnl_first_var_T2_eq3}
T_2=&\sum_{j=0}^{M-1}
\int_{x,y \in \mathbb{R}, x>0 \text{ and }  y>0} dxdy\Big[\frac{1}{M^2} \gamma_{\delta_2}\left(\frac{\abs{y-x}}{M}\right) \nonumber\\
&\qquad \cdot\left(u(x+\frac{j+1}{M}(y-x))-u(x+\frac{j}{M}(y-x))\right) v\left(x+\frac{j+1}{M}(y-x)\right)\Big].
\end{align}
Now let
$ z:=x+\frac{j+1}{M}(y-x)$ to replace $y$, then the integration interval for $z$ becomes
\[
z=\left(1-\frac{j+1}{M}\right)x+\frac{j+1}{M}y >\left(1-\frac{j+1}{M}\right)x.
\]
\eqref{qnl_first_var_T2_eq3} thus becomes
\begin{align}\label{qnl_first_var_T2_eq4}
T_2=& \sum_{j=0}^{M-1}
\int_{x>0 \text{ and }  z>(1-\frac{j+1}{M})x} \Big[\frac{1}{M(j+1)} \\
&\qquad \gamma_{\delta_2}\left(\frac{\abs{z-x}}{j+1}\right)
\cdot\left(u(z)-u(z+\frac{1}{j+1}(x-z))\right) v\left(z\right)\Big]~dxdz\nonumber\\
=&\frac{1}{M}\sum_{j=0}^{M-1}
\int_{z>0 \text{ and }  0<x<\frac{M}{M-(j+1)}z} dxdz\Big[\frac{1}{(j+1)} \gamma_{\delta_2}\left(\frac{\abs{z-x}}{j+1}\right) \nonumber\\
&\qquad \qquad \qquad \qquad\qquad\qquad \cdot\left(u(z)-u(z+\frac{1}{j+1}(x-z))\right) v\left(z\right)\Big],\nonumber
\end{align}
where $\frac{M}{M-(j+1)}z$ is formally regarded as $+\infty$ when $j=M-1$.

Now let $w=z+\frac{1}{j+1}(x-z)=\frac{j}{j+1}z+\frac{1}{j+1}x$ to replace x, thus the integration interval for $w$ is
\[
z-\frac{1}{j+1}z<w<z+\frac{1}{M-(j+1)}z,
\]
and we have $T_2$:
\begin{align}\label{qnl_first_var_T2_eq5}
T_2=&\frac{1}{M} \sum_{j=0}^{M-1}
\int_{z>0 \text{ and }  z-\frac{1}{j+1}z<w<z+\frac{1}{M-(j+1)}z} \Big[\gamma_{\delta_2}\left(\abs{w-z}\right)
\cdot\left(u(z)-u(w)\right) v\left(z\right)\Big]~dwdz\nonumber\\
=&\frac{1}{M} \sum_{j=1}^{M}
\int_{z>0 \text{ and }  z-\frac{1}{j}z<w<z+\frac{1}{M-(j)}z} \Big[\gamma_{\delta_2}\left(\abs{w-z}\right)
\cdot\left(u(z)-u(w)\right) v\left(z\right)\Big]~dwdz\nonumber\\
=&\frac{1}{M}\sum_{j=1}^{M-1}
\int_{z>0 \text{ and }  z-\frac{1}{j}z<w<z+\frac{1}{j}z} \Big[ \gamma_{\delta_2}\left(\abs{w-z}\right)
\cdot\left(u(z)-u(w)\right) v\left(z\right)\Big]~dwdz\nonumber\\
&\quad +\frac{1}{M}\int_{z>0 \text{ and }  z-\frac{1}{M}z<w<\infty} \Big[ \gamma_{\delta_2}\left(\abs{w-z}\right)
\cdot\left(u(z)-u(w)\right) v\left(z\right)\Big]~dwdz.
\end{align}
Now, we combine $T_1$ \eqref{qnl_first_var_T1} and $T_2$ \eqref{qnl_first_var_T2_eq5} together, we have
\begin{align}\label{qnl_first_variation_eq2}
\la& \partial E^{{\mathsf{tot}},{\mathsf{qnl}}}(u), v\ra\nonumber\\
&=\int_{x,y \in \mathbb{R}, x\le 0 \text{ or } y\le 0}
 \gamma_{\delta_1}(\abs{y-x})  \left(u(x)-u(y)\right)\cdot v(x) dxdy\nonumber\\
&\quad+
\frac{1}{M}\sum_{j=1}^{M-1}
\int_{z>0 \text{ and }  z-\frac{1}{j}z<w<z+\frac{1}{j}z} \Big[ \gamma_{\delta_2}\left(\abs{w-z}\right)
\cdot\left(u(z)-u(w)\right) v\left(z\right)\Big]~dwdz\nonumber\\
&\qquad +\frac{1}{M}\int_{z>0 \text{ and }  z-\frac{1}{M}z<w<\infty} \Big[ \gamma_{\delta_2}\left(\abs{w-z}\right)
\cdot\left(u(z)-u(w)\right) v\left(z\right)\Big]~dwdz\nonumber\\
&=\int_{x,y \in \mathbb{R}, x\le 0 \text{ or } y\le 0}
 \gamma_{\delta_1}(\abs{y-x}) \left(u(x)-u(y)\right)\cdot v(x) dxdy\nonumber\\
&\quad+
\frac{1}{M} \sum_{j=1}^{M-1}
\int_{x>0 \text{ and }  x-\frac{1}{j}x<y<x+\frac{1}{j}x} \Big[\gamma_{\delta_2}\left(\abs{y-x}\right)
\cdot\left(u(x)-u(y)\right) v\left(x\right)\Big]~dxdy\nonumber\\
&\qquad +\frac{1}{M}\int_{x>0 \text{ and }  x-\frac{1}{M}x<y<\infty} \Big[ \gamma_{\delta_2}\left(\abs{y-x}\right)
\cdot\left(u(x)-u(y)\right) v\left(x\right)\Big]~dxdy,
\end{align}
where we just replaces the notations $z$ by $x$ and $w$ by $y$.

The corresponding diffusion operator $\calL^{\mathsf{qnl}}$ is equal to negative of the first order variation of $ E^{{\mathsf{tot}},{\mathsf{qnl}}}(u)$,
which can be discussed in three cases below:
\begin{enumerate}
\item{Case I: $x\le 0$:
\[
\calL^{\mathsf{qnl}}u(x)=\int_{y\in \RR}
 \gamma_{\delta_1}(\abs{y-x}) \left(u(x)-u(y)\right)dy.
\]
}
\item{Case II: $0< x < \delta_1$:
\begin{equation*}
\begin{split}
\calL^{\mathsf{qnl}}u(x)=&
\int_{x-\delta_1<y<0}\gamma_{\delta_1}(\abs{y-x}) \left(u(x)-u(y)\right)dy\\
&\quad + \frac{1}{M}\sum_{j=1}^{M-1}
\int_{x-\frac{1}{j}x<y<x+\frac{1}{j}x} \gamma_{\delta_2}\left(\abs{y-x}\right)
\cdot\left(u(x)-u(y)\right) dy\nonumber\\
&\qquad\qquad + \frac{1}{M}\int_{x-\frac{1}{M}x<y<\infty} \gamma_{\delta_2}\left(\abs{y-x}\right)
\cdot\left(u(x)-u(y)\right)dy.
\end{split}
\end{equation*}
}
\item{Case III: $x\ge \delta_1$:
\begin{equation*}
\begin{split}
\calL^{\mathsf{qnl}}u(x)=&
 \frac{1}{M}\sum_{j=1}^{M-1}
\int_{x-\frac{1}{j}x<y<x+\frac{1}{j}x} \gamma_{\delta_2}\left(\abs{y-x}\right)
\cdot\left(u(x)-u(y)\right) dy\nonumber\\
&\qquad\qquad +\frac{1}{M} \int_{x-\frac{1}{M}x<y<\infty} \gamma_{\delta_2}\left(\abs{y-x}\right)
\cdot\left(u(x)-u(y)\right)dy.
\end{split}
\end{equation*}
Notice that $x\ge \delta_1$, thus
\[
x-\frac{1}{j}x<x-\frac{1}{M}x<x-\frac{1}{M}\delta_1=x-\delta_2,
\]
and
\[
x+\frac{1}{j}x>x+\frac{1}{M}x>x+\frac{1}{M}\delta_1=x+\delta_2.
\]
 Because outside the support, the diffusion kernel is zero, therefore, we have
\begin{equation*}
\begin{split}
\calL^{\mathsf{qnl}}u(x)=&
\frac{1}{M}\sum_{j=1}^{M-1}
\int_{x-\delta_2<y<x+\delta_2}  \gamma_{\delta_2}\left(\abs{y-x}\right)
\cdot\left(u(x)-u(y)\right) dy\nonumber\\
&\qquad\qquad +\frac{1}{M}\int_{x-\delta_2<y<\infty}  \gamma_{\delta_2}\left(\abs{y-x}\right)
\cdot\left(u(x)-u(y)\right)dy\\
=&\int_{x-\delta_2<y<x+\delta_2}\gamma_{\delta_2}\left(\abs{y-x}\right)
\cdot\left(u(x)-u(y)\right) dy\\
=&\int_{y\in \RR}\gamma_{\delta_2}\left(\abs{y-x}\right)
\cdot\left(u(x)-u(y)\right) dy.
\end{split}
\end{equation*}
}
\end{enumerate}
Hence, we get \eqref{def_Lqnl}.
\end{proof}


\bibliographystyle{abbrv}
\bibliography{QNLcouple}

\end{document}